\documentclass[11pt]{scrartcl}
\usepackage{amsfonts,amssymb,amsthm,amsmath,booktabs}
\usepackage{color,xcolor,graphicx}
\usepackage[latin1]{inputenc}
\usepackage{caption}
\usepackage{subcaption}

\ifx\pdftexversion\undefined
\usepackage{nohyperref}
\else
\usepackage[colorlinks=true,linkcolor={blue},citecolor={blue},urlcolor={blue},
  pdfauthor={Thomas Apel, Serge Nicaise, Johannes Pfefferer},
  pdfstartview={Fit}]{hyperref}
\fi

\newcommand{\abssec}[1]{\noindent\normalsize {\bfseries #1\quad }\ignorespaces}
\renewenvironment{abstract}{\abssec{Abstract}}{\par\vspace{.1in}}
\newenvironment{keywords}{\abssec{Key Words}}{\par\vspace{.1in}}
\newenvironment{AMSMOS}{\abssec{AMS subject
  classification}}{\par\vspace{.1in}}
\newenvironment{acknowledgement}{\abssec{Acknowledgement}}{\par\vspace{.1in}}

\theoremstyle{plain}
\newtheorem{theorem}{Theorem}[section]
\newtheorem{corollary}[theorem]{Corollary}

\newtheorem{lemma}[theorem]{Lemma}

\theoremstyle{definition}
\newtheorem{remark}[theorem]{Remark}

\numberwithin{equation}{section}

\def\R{\mathbb{R}}

\newcommand{\normaltrace}{\zeta}

\definecolor{darkgreen}{rgb}{0.0, 0.5, 0.3}

\begin{document}

\title{\LARGE Discretization of the Poisson equation \\ with non-smooth data and\\
		emphasis on non-convex domains\thanks{The work
    was partially supported by Deutsche Forschungsgemeinschaft,
    priority program 1253 and IGDK 1754.}}

\author{Thomas Apel\thanks{\texttt{thomas.apel@unibw.de},
    Universit\"at der Bundeswehr M\"unchen, Institut f\"ur Mathematik
    und Bauinformatik, D-85579 Neubiberg, Germany} \and Serge
  Nicaise\thanks{\texttt{snicaise@univ-valenciennes.fr}, LAMAV,
    Institut des Sciences et Techniques de Valenciennes, Universit\'e
    de Valenciennes et du Hainaut Cambr\'esis, B.P. 311, 59313
    Valenciennes Cedex, France } \and Johannes
  Pfefferer\thanks{\texttt{johannes.pfefferer@unibw.de}, Universit\"at
    der Bundeswehr M\"unchen, Institut f\"ur Mathematik und
    Bauinformatik, D-85579 Neubiberg, Germany}}
\maketitle

\begin{abstract}
  Several approaches are discussed how to understand the solution of
  the Dirichlet problem for the Poisson equation when the Dirichlet
  data are non-smooth such as if they are in $L^2$ only. For the method of transposition (sometimes
  called \emph{very weak formulation}) three spaces for the test
  functions are considered, and a regularity result is proved. An
  approach of Berggren is recovered as the method of transposition
  with the second variant of test functions. A further concept is the
  regularization of the boundary data combined with the weak solution
  of the regularized problem. The effect of the regularization error
  is studied.

  The regularization approach is the simplest  to discretize. The
  discretization error is estimated for a sequence of quasi-uniform
  meshes. Since this approach turns out to be equivalent to Berggren's
  discretization his error estimates are rendered more precisely.
  Numerical tests show that  the error estimates are sharp, in
  particular that the order becomes arbitrarily small when the maximal
  interior angle of the domain tends to $2\pi$.
\end{abstract}

\begin{keywords}
  Elliptic boundary value problem, method of transposition, very weak
  formulation, finite element method, discretization error estimate
\end{keywords}

\begin{AMSMOS}
  65N30; 65N15
\end{AMSMOS}

\section{Introduction}

The motivation for this paper is to consider the boundary value problem
\begin{align} \label{eq:bvp}
  -\Delta y &= f \quad\text{in }\Omega, \qquad 
  y = u \quad\text{on }\Gamma:=\partial\Omega,
\end{align}
with right hand side $f\in H^{-1}(\Omega)$ and boundary data $u\in L^2(\Gamma)$. We assume $\Omega\subset\R^2$ to be a
bounded polygonal domain with boundary $\Gamma$. Such problems arise
in optimal control when the Dirichlet boundary control is considered
in $L^2(\Gamma)$ only, see for example the papers by
Deckelnick, G\"unther, and Hinze, \cite{DeckelnickGuentherHinze2009},
French and King, \cite{FrenchKing1991}, May, Rannacher, and
Vexler, \cite{MayRannacherVexler2008}, and Apel, Mateos, Pfefferer, and R\"osch, \cite{ApelMateosPfeffererRoesch2013}.
On the continuous level we even admit more irregular data.

In Section \ref{sect:bvp} we analyze several ways how to understand
the solution of the boundary value problem \eqref{eq:bvp} for which we cannot expect a weak solution $y\in H^1(\Omega)$.
The most popular method to solve problem \eqref{eq:bvp} is the transposition method
that goes back to Lions and Magenes \cite{LionsMagenes1968} and that
is based on the use of some integration by parts. 
This formally leads
to the \emph{very weak formulation}: Find $y\in Y$ such that
\begin{align}\label{eq:veryweakintro}
   (y, \Delta v)_\Omega =
  (u,\partial_n v)_\Gamma - (f,v)_\Omega \quad\forall
  v\in V
\end{align}
with $(w,v)_G:=\int_G wv$ denoting the $L^2(G)$ scalar product
or an appropriate duality product.  The main issue is to
find the appropriate trial space $Y$ and test space $V$.  In the
convex case, it turns out that a good choice is $Y=L^2(\Omega)$ and
$V=H^2(\Omega)\cap H^1_0(\Omega)$. This case is investigated by many
authors, some of them are mentioned in Subsection
\ref{sec:methodtransposition}, but we will see that such a choice is
not appropriate in the non-convex case (loss of uniqueness) and
present some remedies (enlarged test spaces) in Subsections \ref{sec:general} and \ref{sec:weighted}.
We will further show in Subsection \ref{sec:general} that in the correct setting
the very weak solution corresponds to the one obtained by an
integral equation technique and hence has $H^{1/2}(\Omega)$
regularity.

The main drawback of the very weak formulation is the fact that a
conforming discretization of the test space should be made by $C^1$-elements.  Hence
Berggren proposed in \cite{Berggren2004} to introduce two new variables,
$\varphi:=-\Delta v$ and $\normaltrace:=\partial_n v$
allowing to perform a simpler numerical analysis,
see Subsection \ref{sec:Berggren}. In Subsection \ref{sec:regularizationstrategy} we
propose another method that consists of regularizing the Dirichlet
datum $u$ by approximating it by a sequence of functions $u^h$ in
$H^{1/2}(\Gamma)$ using for example an interpolation
operator.  This allows to compute a sequence of weak solutions $y^h\in
H^1(\Omega)$, and we show that they converge to the very weak solution
with an explicit convergence rate.

A negative result about the
well-posedness of the weak formulation with $L^1(\Gamma)$-data completes the
discussion on the continuous level.

Section \ref{sect:num} is devoted to the numerical analysis.
We start with Berggren's numerical approach and recall
his error estimates in Subsection \ref{sec:numberg} for completeness.
Next we perform in Subsection \ref{sec:numreg} a numerical analysis of our
regularization approach and prove error estimates for the piecewise
linear approximation on a family of conforming, quasi-uniform finite
element meshes. Notice that it turns out that on the discrete level
Berggren's approach is a particular case of our regularization strategy.
The convergence order is $\frac12$ in the convex case but smaller in the non-convex case. This reduction
can be explained by the singular behaviour of the solution of the dual problem. In our paper
\cite{ApelNicaisePfefferer2014b} we investigate the singular
complement method to remedy the suboptimality of the standard finite
element method in non-convex domains.


Finally, in Section \ref{sect:test} we present numerical tests in
order to illustrate that our error estimates are sharp.
The paper ends with some remarks about the three-dimensional case
and about data with different regularity than assumed above.

\section{\label{sect:bvp}Analysis of the boundary value problem}

In this section we analyze several ways how to understand the solution
of the boundary value problem \eqref{eq:bvp} for which we cannot
expect a weak solution $y\in H^1(\Omega)$. 

For keeping the notation succinct we assume that the polygonal domain
$\Omega$ has at most one non-convex corner with interior angle, denoted by $\omega$.
Let $r,\theta$ be the corresponding polar coordinates and define
$\lambda:=\pi/\omega$. The boundary segments of $\Omega$ are denoted
by $\Gamma_j$, $j=1,\ldots,N$.


\subsection{\label{sec:methodtransposition}Method of transposition in convex polygonal domains}

The method of transposition goes back at least to Lions and Magenes,
\cite{LionsMagenes1968}, and is used by several other authors including
French and King, \cite{FrenchKing1991}, Casas and Raymond,
\cite{CasasRaymond2006}, Dec\-kel\-nick, G\"unther, and Hinze,
\cite{DeckelnickGuentherHinze2009}, and May, Rannacher, and Vexler,
\cite{MayRannacherVexler2008}. Since by partial integration the
derivation
\begin{align*}
  (f,v)_\Omega &= -(\Delta y,v)_\Omega =
  (\nabla y,\nabla v)_\Omega &&\text{for } v\in H^1_0(\Omega)\\
  &= (y,\partial_n v)_\Gamma - (y, \Delta v)_\Omega
  &&\text{for } v\in H^2(\Omega)\cap H^1_0(\Omega)
\end{align*}
is valid we get the \emph{very weak formulation}: Find
\begin{align}\label{eq:veryweak1}
  y\in L^2(\Omega):\quad (y, \Delta v)_\Omega =
  (u,\partial_n v)_\Gamma - (f,v)_\Omega \quad\forall
  v\in  H^2(\Omega)\cap H^1_0(\Omega).
\end{align}
May, Rannacher, and Vexler, \cite{MayRannacherVexler2008}, proved the
existence of a solution in $L^2(\Omega)$ in the case of a convex
polygonal domain $\Omega$.

\begin{lemma}\label{lem:MRV}
  If the domain $\Omega$ is convex there exists a unique solution
  $y\in L^2(\Omega)$ of problem \eqref{eq:veryweak1} that satisfies
  the a priori error estimate
  \begin{align}\label{eq:MRV}
    \|y\|_{L^2(\Omega)} \le c 
    \left(\|u\|_{\prod_{j=1}^N H^{1/2}_{00}(\Gamma_j)'} + \|f\|_{\left(H^2(\Omega)\cap H^1_0(\Omega)\right)'} \right)
  \end{align}
  provided that $u\in \prod_{j=1}^N H^{1/2}_{00}(\Gamma_j)'$ and $f\in
  \left(H^2(\Omega)\cap H^1_0(\Omega)\right)'$.
\end{lemma}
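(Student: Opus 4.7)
The plan is to realise $y$ as the Riesz representative of a suitable linear functional on $L^2(\Omega)$, using the $H^2$-regularity of the Dirichlet problem on the convex polygonal domain $\Omega$.

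First I would introduce the solution operator $T:L^2(\Omega)\to \Vreg$ defined by $v=Tg$ where $-\Delta v = g$ in $\Omega$, $v=0$ on $\Gamma$. Since $\Omega$ is convex, the standard shift theorem (Grisvard) gives that $T$ is a bounded isomorphism with $\|v\|_{H^2(\Omega)}\le c\,\|g\|_{L^2(\Omega)}$. I would then define the linear form
\begin{equation*}
  F(g) := (f,v)_\Omega - (u,\partial_n v)_\Gamma, \qquad v=Tg,
\end{equation*}
and aim to show that $F$ is continuous on $L^2(\Omega)$ with norm controlled by the right-hand side of \eqref{eq:MRV}. By the Riesz representation theorem this yields a unique $y\in L^2(\Omega)$ with $(y,g)_\Omega = -F(g)$ for all $g\in L^2(\Omega)$; rewriting $g=-\Delta v$ recovers exactly \eqref{eq:veryweak1}. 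Uniqueness then follows because the range of $-\Delta$ on $\Vreg$ is all of $L^2(\Omega)$, so $(y,\Delta v)_\Omega=0$ for all admissible $v$ forces $y=0$.

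The continuity of the $f$-term is immediate from the definition of the dual norm and the $H^2$-bound on $v$. The delicate piece, and what I expect to be the main obstacle, is the boundary term: one needs
\begin{equation*}
  \|\partial_n v\|_{\prod_{j=1}^N H^{1/2}_{00}(\Gamma_j)} \le c\,\|v\|_{H^2(\Omega)}
  \quad \text{for all } v\in \Vreg,
\end{equation*}
so that the duality pairing with $u$ makes sense and is bounded. On each straight edge $\Gamma_j$ the trace theorem gives $\partial_n v\in H^{1/2}(\Gamma_j)$; the extra point is that because $v\in H^1_0(\Omega)\cap H^2(\Omega)$ vanishes on both edges meeting at each corner, its tangential derivative vanishes there as well, and since the two edges are not colinear this forces $\partial_n v$ to vanish at the endpoints of $\Gamma_j$ in the precise sense characterising $H^{1/2}_{00}(\Gamma_j)$. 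I would establish this either by localising to a corner, flattening, and invoking the standard $H^{1/2}_{00}$ characterisation, or simply by citing the corresponding trace result in Grisvard's monograph for polygons.

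With these two bounds combined one obtains $|F(g)|\le c\big(\|u\|_{\prod_j H^{1/2}_{00}(\Gamma_j)'}+\|f\|_{(\Vreg)'}\big)\|g\|_{L^2(\Omega)}$, and the Riesz step then delivers both the existence of $y\in L^2(\Omega)$ and the a priori estimate \eqref{eq:MRV} at once.
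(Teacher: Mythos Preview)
Your proposal is correct and rests on the same two ingredients as the paper---the $H^2$-regularity isomorphism $-\Delta:\Vreg\to L^2(\Omega)$ on a convex polygon, and the fact that the normal trace maps $\Vreg$ continuously into $\prod_j H^{1/2}_{00}(\Gamma_j)$ (which the paper simply cites as \cite[Thm.~1.5.2.8]{grisvard:85a})---but the structure is different. The paper first assumes more regular data $u\in H^{1/2}(\Gamma)$, $f\in H^{-1}(\Omega)$, so that a weak solution $y\in H^1(\Omega)$ exists, proves the estimate \eqref{eq:MRV} for this $y$ by a duality argument (solve $-\Delta w=y$, $w|_\Gamma=0$, and compute $\|y\|_{L^2(\Omega)}^2=(y,-\Delta w)_\Omega$), and then extends to general data by density. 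Your Riesz-representation route is more direct: it constructs $y$ in one stroke from the boundedness of the functional $F$, with existence, uniqueness, and the a~priori bound all falling out simultaneously, and no density step is needed. The paper's approach has the side benefit of making explicit that the very weak solution coincides with the weak one when the latter exists; your approach is closer in spirit to what the paper does later, in Lemma~\ref{lem:existenceveryweak}, via the Babu\v ska--Lax--Milgram theorem. (There is a harmless sign slip in your Riesz step: with your definition of $F$ and $g=-\Delta v$, equation \eqref{eq:veryweak1} corresponds to $(y,g)_\Omega=F(g)$, not $-F(g)$.)
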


Recall that $H^{1/2}_{00}(\Gamma_j)$ is the space of functions whose
extension by zero to  $\Gamma$ is in $H^{1/2}(\Gamma)$.

\begin{proof}
  For being self-contained we sketch here the proof of \cite[Lemma
  2.1]{MayRannacherVexler2008}. The idea is first to assume more
  regular data, $u\in H^{1/2}(\Gamma)$, $f\in H^{-1}(\Omega)$ such
  that a weak solution $y\in H^1(\Omega)$ exists, then to show
  \eqref{eq:MRV}, and finally to use a standard density argument since
  $H^2(\Omega)\cap H^1_0(\Omega) \stackrel{c}{\hookrightarrow}
  H^1_0(\Omega)$ and $H^{1/2}(\Gamma) \stackrel{c}{\hookrightarrow}
  L^2(\Gamma) = \prod_{j=1}^N L^2(\Gamma_j)
  \stackrel{c}{\hookrightarrow} \prod_{j=1}^N
  H^{1/2}_{00}(\Gamma_j)'$.

  The estimate \eqref{eq:MRV} is proven by using a duality argument.
  Due to the convexity of the domain there exists a solution $w\in
  H^2(\Omega)$ of the auxiliary problem
  \begin{align} \label{eq:auxbvpMRV}
    -\Delta w &= y \quad\text{in }\Omega, \qquad
    w = 0 \quad\text{on }\Gamma,
  \end{align}
  such that
  \begin{align*}
    \|y\|_{L^2(\Omega)}^2 & = (y,-\Delta w)_\Omega = 
    (f,w)_\Omega - 
    ( u,\partial_n w)_{\Gamma}\\
    &\le c\left( \|u\|_{\prod_{j=1}^N H^{1/2}_{00}(\Gamma_j)'} + 
    \|f\|_{\left(H^2(\Omega)\cap H^1_0(\Omega)\right)'} \right)\|w\|_{H^2(\Omega)},
  \end{align*}
  where one uses that the mapping $H^2(\Omega)\cap H^1_0(\Omega) \to
  \prod_{j=1}^N H^{1/2}_{00}(\Gamma_j)$, $w\mapsto\partial_nw$, is
  surjective due to \cite[Thm. 1.5.2.8]{grisvard:85a}. The desired
  estimated \eqref{eq:MRV} is then obtained by using the a priori
  estimate $\|w\|_{H^2(\Omega)} \le c \|y\|_{L^2(\Omega)}$ and by
  division by $\|y\|_{L^2(\Omega)}$.
\end{proof}

The method of proof of this lemma will be revisited in Section \ref{sec:regularizationstrategy},
where we start the discussion of the regularization approach.
However, there we will work with different function spaces such that non-convex domains are included in the theory as well.
The given proof of Lemma \ref{lem:MRV} is even restricted to convex domains since the isomorphism 
\[
  \Delta w\in L^2(\Omega),\ w_{|\Gamma}=0\quad\Leftrightarrow\quad w\in H^2(\Omega)\cap H^1_0(\Omega)
\]
is used.
If the domain is non-convex one loses at least the uniqueness
of the solution $y$ of \eqref{eq:veryweak1}.  For example, take
$\Omega=\{(r\cos\theta,r\sin\theta)\in\R^2: 0<r<1,
0<\theta<\omega\}$ and $\lambda=\pi/\omega$, then both
$y_1=r^\lambda\sin(\lambda\theta)$ and
$y_2=r^{-\lambda}\sin(\lambda\theta)$ are harmonic in $\Omega$ and
$y_1=y_2$ on $\Gamma$. Both satisfy \eqref{eq:veryweak1} with $f\equiv0$ and $g=y_1=y_2$ on $\Gamma$. Hence, one
needs a larger test space in order to rule out $y_2$.

\subsection{Method of transposition in general polygonal domains}\label{sec:general}
In a first instance we replace the test space
$H^2(\Omega)\cap H^1_0(\Omega)$ by
\begin{align}\label{eq:H1Delta}
  V:=H^1_0(\Omega)\cap H^1_\Delta(\Omega) \quad\text{with}\quad
  H^1_\Delta(\Omega):=\{v\in H^1(\Omega): \Delta v\in L^2(\Omega)\}.
\end{align}
Since $|v|_{H^1(\Omega)} \le c\|\Delta
v\|_{L^2(\Omega)}$, the graph norm in $V$, that is $\|\Delta
v\|_{L^2(\Omega)} + |v|_{H^1(\Omega)}$, is equivalent to $\|\Delta
v\|_{L^2(\Omega)}$ such that we will use henceforth
\[
  \|v\|_V=\|\Delta v\|_{L^2(\Omega)}.
\]
Furthermore, let us denote by $V_\Gamma$ the space of normal derivatives $\partial_n v$ of functions $v\in V$.
According to \cite[Theorem 1.5.3.10]{grisvard:85a} this space is well-defined and a subspace of $\prod_{j=1}^N H^{1/2}_{00}(\Gamma_j)'$.
The natural norm in $V_\Gamma$ is given by
\[
 \|g\|_{V_\Gamma}:=\inf\left\{\|v\|_V:v\in V,\partial_nv=g\right\}.
\]
In particular, we have for $v\in V$ that
\begin{equation}\label{jonny:normalV}
  \|\partial_n v\|_{V_\Gamma}\leq \|v\|_{V}.
\end{equation}
Since the previous definitions of the spaces $V$ and $V_\Gamma$ are rather formal let us discuss the structure of these spaces.
\begin{remark}
	The spaces $V$ and $V_\Gamma$ can be characterized as follows:
 \begin{enumerate}
  \item If $\Omega$ is convex the spaces $V$ and $H^2(\Omega)\cap H^1_0(\Omega)$ coincide. However, in non-convex domains the situation is different. In this case there is the splitting
  \[
   V=\left(H^2(\Omega)\cap H^1_0(\Omega)\right)\oplus \operatorname{Span} \{\xi(r)\,r^\lambda\sin(\lambda\theta)\},
  \]
  where $\xi$ denotes a smooth cut-off function which is equal to one in the neighborhood of the non-convex corner. For more details we refer to \cite[Sections 1.5, 2.3 and 2.4]{grisvard:92b} and \cite[Theorem 4.4.3.7]{grisvard:85a}.
	\item The mapping $H^2(\Omega)\cap H^1_0(\Omega) \to
  \prod_{j=1}^N H^{1/2}_{00}(\Gamma_j)$, $w\mapsto\partial_nw$, is
  surjective due to \cite[Thm. 1.5.2.8]{grisvard:85a}.
  Accordingly, in the convex case $V_\Gamma$ is just $\prod_{j=1}^N H^{1/2}_{00}(\Gamma_j)$, whereas in the non-convex case there holds
  \[
   V_\Gamma=\left(\prod_{j=1}^N H^{1/2}_{00}(\Gamma_j)\right)\oplus \operatorname{Span} \{\xi(r)\,r^{\lambda-1}\}.
  \]
  \item In the non-convex case there is $V\hookrightarrow H^s(\Omega)\cap H^1_0(\Omega)$ for $s<1+\lambda$ and $V_\Gamma\hookrightarrow H^t(\Gamma)$ for $t<\lambda-\frac12$.
  This implies $\left(H^{s}(\Omega)\cap H^1_0(\Omega)\right)'\hookrightarrow V'$ and $H^t(\Gamma)'\hookrightarrow V_\Gamma'$.
 \end{enumerate}
\end{remark}

\begin{lemma}\label{lem:existenceveryweak}
	Let $f\in V'$ and $u\in V_\Gamma'$. Then there exists a unique solution
	\begin{align}\label{eq:veryweak2}
		y\in L^2(\Omega):\quad (y, \Delta v)_\Omega =
		(u,\partial_n v)_\Gamma - (f,v)_\Omega \quad\forall
		v\in V
	\end{align}
  with
  \[
    \|y\|_{L^2(\Omega)} \le  
    \|u\|_{V_\Gamma'} + \|f\|_{V'}.
  \]
\end{lemma}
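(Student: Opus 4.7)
The plan is to recast the very weak formulation as a Riesz representation problem on $L^2(\Omega)$, exploiting the fact that the Laplacian
\[
  \Delta:V\to L^2(\Omega)
\]
is an isometric isomorphism with respect to the chosen norm $\|v\|_V=\|\Delta v\|_{L^2(\Omega)}$. The surjectivity is immediate from the classical Lax--Milgram theorem: for any $g\in L^2(\Omega)$ the problem $-\Delta w=g$ in $\Omega$, $w=0$ on $\Gamma$ admits a unique solution $w\in H^1_0(\Omega)$, and by definition $\Delta w\in L^2(\Omega)$, so $w\in V$. Injectivity follows because $\Delta v=0$ together with $v\in H^1_0(\Omega)$ forces $v=0$. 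Since $\|v\|_V=\|\Delta v\|_{L^2(\Omega)}$ by construction, this map is an isometry.

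Next I would define the linear functional $F:\operatorname{Range}(\Delta)=L^2(\Omega)\to\R$ by
\[
  F(\Delta v):=(u,\partial_n v)_\Gamma-(f,v)_\Omega.
\]
The isomorphism just established shows that $F$ is well defined (it does not depend on the particular $v$ producing a given $\Delta v$). Its boundedness follows from the duality pairings together with \eqref{jonny:normalV}:
\[
  |F(\Delta v)|\le\|u\|_{V_\Gamma'}\|\partial_n v\|_{V_\Gamma}+\|f\|_{V'}\|v\|_V
  \le\bigl(\|u\|_{V_\Gamma'}+\|f\|_{V'}\bigr)\|\Delta v\|_{L^2(\Omega)}.
\]
By the Riesz representation theorem there is a unique $y\in L^2(\Omega)$ with $(y,\Delta v)_\Omega=F(\Delta v)$ for every $v\in V$, which is exactly \eqref{eq:veryweak2}, and the norm of this representative is bounded by the operator norm of $F$, giving the claimed estimate.

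Uniqueness is then a one-liner: if $y_1,y_2$ both solve \eqref{eq:veryweak2}, then $y:=y_1-y_2$ satisfies $(y,\Delta v)_\Omega=0$ for all $v\in V$; surjectivity of $\Delta$ makes $\Delta v$ range over all of $L^2(\Omega)$, so $y\equiv0$.

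The only nontrivial step is justifying that $\Delta:V\to L^2(\Omega)$ is a norm-preserving bijection, i.e.\ that the graph norm and the $L^2$-norm of the Laplacian are equivalent on $V$; but this is exactly the Poincar\'e-type inequality $|v|_{H^1(\Omega)}\le c\|\Delta v\|_{L^2(\Omega)}$ recorded just after \eqref{eq:H1Delta}, so no further obstacle arises. Note in particular that, in contrast to the proof of Lemma \ref{lem:MRV}, no $H^2$-regularity of the dual problem is invoked, which is precisely why the argument extends to non-convex domains.
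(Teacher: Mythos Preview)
Your argument is correct and rests on the same key observation as the paper's proof, namely that $\Delta:V\to L^2(\Omega)$ is an isomorphism (indeed an isometry for the chosen norm on $V$). The paper, however, packages this differently: it verifies the two inf-sup conditions for the bilinear form $(y,\Delta v)_\Omega$ on $L^2(\Omega)\times V$ and then invokes the Babu\v ska--Lax--Milgram theorem. Your route is more direct: you use the isometry to pull the problem back to $L^2(\Omega)$ and apply the Riesz representation theorem, which here suffices because after the substitution $z=\Delta v$ the bilinear form becomes the $L^2$ inner product. This avoids the inf-sup machinery at the cost of being less immediately adaptable to settings where trial and test spaces are not linked by an isometry (as in Lemma~\ref{lem:existence_using_weighted_spaces}, where the paper's Babu\v ska--Lax--Milgram template carries over with only minor changes). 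Either way the a~priori bound with constant $1$ drops out, and your closing remark about why no $H^2$-regularity is needed is exactly the point.
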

\
\begin{proof}
	The proof of this lemma is based on the Babu\v ska--Lax--Milgram theorem.
  Due to \eqref{jonny:normalV} the right hand side of \eqref{eq:veryweak2} defines a
  linear functional on~$V$. Moreover, the bilinear form is bounded on $L^2(\Omega)\times V$. The inf-sup conditions are proved by using the isomorphism
  \[
    \Delta v\in L^2(\Omega),\ v_{|\Gamma}=0\quad \Leftrightarrow \quad v \in V.
  \]
  In particular, we obtain by taking $y=\Delta v$
  \begin{align*}
    \sup_{y\in L^2(\Omega)} \frac{\left|(y,\Delta v)_\Omega\right|}{\|y\|_{L^2(\Omega)}} &\ge
    \frac{(\Delta v,\Delta v)_\Omega}{\|\Delta v\|_{L^2(\Omega)}} =
    \|\Delta v\|_{L^2(\Omega)}=\|v\|_V,
  \end{align*}
  and by taking the solution $v\in V$ of $\Delta v=y$
  \begin{align}\label{infsup2}
    \sup_{v\in V} \frac{\left|(y,\Delta v)_\Omega\right|}{\|v\|_V}  &\ge
    \frac{(y,y)_\Omega}{\|y\|_{L^2(\Omega)}}=\|y\|_{L^2(\Omega)}.
  \end{align}
  The existence of the unique solution $y\in L^2(\Omega)$ of problem
  \eqref{eq:veryweak2} follows from the standard Babu\v ska--Lax--Milgram
  theorem, see for example \cite[Theorem 2.1]{Babuska1970}. The
  a priori estimate follows from \eqref{infsup2}, and
  \eqref{eq:veryweak2} and \eqref{jonny:normalV},
  \begin{align*}
    \|y\|_{L^2(\Omega)} &\le 
    \sup_{v\in V} \frac{\left|(y,\Delta v)_\Omega\right|}{\|v\|_V} =
    \sup_{v\in V} \frac{\left|(u,\partial_n v)_\Gamma - 
    (f,v)_\Omega\right|}{\|v\|_V}
    \le \|u\|_{V_\Gamma'}+\|f\|_{V'}.
  \end{align*}
\end{proof}

Note that if a weak solution $y\in \{v\in H^1(\Omega):v|_\Gamma=u\}$
exists, then with the help of the Green formula
\begin{align}\label{eq:Green}
  (\partial_n v,\chi)_\Gamma &= (\nabla
  v,\nabla\chi)_\Omega + (\Delta v,\chi)_\Omega \quad
  \forall v\in H^1_\Delta(\Omega)\supset V, \ \forall\chi\in
  H^1(\Omega),
\end{align}
see Lemma 3.4 in the paper \cite{Costabel1988} by Costabel, it is also
a very weak solution. (Set $\chi=y$ and use $(\nabla v,\nabla
  y)_\Omega=(f,v)_\Omega$.)

A possible difficulty with this formulation is that a conforming
discretization with a finite-dimensional space $V_h\subset V$ would
require the use of $C^1$-functions. This is simple for one-dimensional
domains $\Omega$ but requires a lot of degrees of freedom in two (and
more) dimensions.

We finish this subsection with a regularity result.
\begin{lemma}\label{lem:regularity}
  The unique solution $y\in L^2(\Omega)$ of problem
  \eqref{eq:veryweak2} with $u\in L^2(\Gamma)$ and $f=0$ belongs to
  $H^{1/2}(\Omega)$ and to
  \begin{align}\label{tildeW12}
    \widetilde W^{1,2}(\Omega):=\{z\in L^2(\Omega): \delta^{1/2}\nabla
    z\in L^2(\Omega)^2\}
  \end{align}
  where $\delta(x)$ is the distance of $x$ to the boundary $\Gamma$.
  Furthermore, there exists a positive constant $c$ such that
  \begin{equation}\label{eq:apriori0}
    \|y\|_{H^{1/2}(\Omega)}+\|\delta^{1/2}\nabla y\|_{L^2(\Omega)^2}
    \leq c\|u\|_{L^2(\Gamma)}.
  \end{equation}
\end{lemma}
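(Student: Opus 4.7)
The strategy is to reduce to smooth boundary data by density and then establish the two estimates via an integral equation / layer-potential representation. Since $H^{1/2}(\Gamma)$ is dense in $L^2(\Gamma)$ and the very-weak map $u\mapsto y$ is continuous into $L^2(\Omega)$ by Lemma \ref{lem:existenceveryweak}, it suffices to prove \eqref{eq:apriori0} for $u\in H^{1/2}(\Gamma)$: for such data the very weak solution coincides with the classical weak solution $y\in H^1(\Omega)$ by the Green formula observation made just after Lemma \ref{lem:existenceveryweak}, and one can then pass to the limit, identifying the limit through uniqueness in \eqref{eq:veryweak2}.

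For the $H^{1/2}(\Omega)$ bound I would invoke the classical result on Lipschitz domains that the Dirichlet problem $-\Delta y=0$, $y|_\Gamma=u\in L^2(\Gamma)$, has a unique harmonic solution in $H^{1/2}(\Omega)$ with $\|y\|_{H^{1/2}(\Omega)} \le c\|u\|_{L^2(\Gamma)}$; this is proved via boundary integral equations (Jerison--Kenig, Verchota), and $y$ admits a representation as a layer potential whose mapping properties yield the estimate directly. As the hint before Lemma \ref{lem:regularity} indicates, this is exactly the integral-equation route that the authors allude to.

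For the weighted gradient estimate, I see two routes. Route (a): exploit that for harmonic functions the weighted space $\widetilde W^{1,2}(\Omega)$ coincides, up to a zero-order term, with $H^{1/2}(\Omega)$ (Grisvard, Ne\v cas), so it is actually equivalent to the first bound. Route (b): apply a Rellich-type identity obtained by multiplying $-\Delta y=0$ by $\vec\beta\cdot\nabla y$, where $\vec\beta$ is a smooth vector field extending the unit outer normal from $\Gamma$ and supported near $\Gamma$; integration by parts produces exactly a weighted $L^2$ norm of $\nabla y$ on the left and the $L^2(\Gamma)$ norm of $u$ on the right.

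The main obstacle is the non-convex corner. Near a reentrant corner one has $\nabla y\sim r^{\lambda-1}$, which is not square-integrable, but the weight $\delta^{1/2}\sim r^{1/2}$ compensates since $2(\lambda-1)+1>-1$ for all $\lambda>1/2$, so the bound is dimensionally correct. Still, one must justify the integration by parts despite this singularity. I would handle this by working first with $u\in H^{1/2}(\Gamma)$ regularized so that $y\in H^{3/2+\varepsilon}(\Omega)$ (for which all manipulations are legal), using the splitting of $V$ given in the remark to control the corner contribution, and then passing to the limit using the uniform $L^2$-continuity from Lemma \ref{lem:existenceveryweak}. Route (a) is cleaner but relies on an external equivalence; route (b) is more self-contained but requires this limiting argument at the corner.
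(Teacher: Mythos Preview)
Your proposal is essentially the paper's proof: the authors represent the harmonic solution as a double-layer potential via Verchota's $L^2$-invertibility of $\tfrac12 I+K$, invoke Costabel for the $H^{1/2}$ bound and Jerison--Kenig directly for the $\widetilde W^{1,2}$ bound, and then identify this layer-potential solution with the very weak solution by approximating $u$ with $H^1(\Gamma)$ data. Your Route~(a) is exactly this Jerison--Kenig citation (the attribution to Grisvard/Ne\v{c}as is off), so Route~(b) and the reentrant-corner discussion are unnecessary detours---the Verchota and Jerison--Kenig results are stated for Lipschitz domains and already absorb the corner singularity.
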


\begin{remark}
  The book by Chabrowski, \cite{Chabrowski1991}, deals exclusively with
  the Dirichlet problem with $L^2$ boundary data for elliptic linear
  equations. The solution is searched there in the Sobolev space
  \eqref{tildeW12} but domains of class $C^{1,1}$ were considered
  only.
\end{remark} 

\begin{proof}[Proof of Lemma \ref{lem:regularity}]
  In a first step, we use an integral representation and some
  properties of the layer potentials to get a solution with the
  appropriate regularity.  Theorem 4.2 of Verchota's paper
  \cite{Verchota1984} shows that the operator $\frac12 I+K$, $K$ being
  the boundary double layer potential, is an isomorphism from
  $L^2(\Gamma)$ into itself (see also Corollary 4.5 of
  \cite{MitreaTaylor1999}).  According to the trace property for the
  double layer potential $\cal K$ (see Section 1 and Corollary 3.2 of
  \cite{Verchota1984}), there exists a unique harmonic function $z$
  such that
  \[
    z\to u \hbox{ a.e.  in nontangential cones}
  \]
  with the notation from \cite{Verchota1984}, and is given by
  \begin{equation}\label{eq:intrepresentation}
    z={\cal K} (\tfrac12 I+K)^{-1} u.
  \end{equation}
  But due to the above mentioned isomorphism property and Theorem 1 of
  Costabel's paper \cite{Costabel1988} (and its following Remark), we
  obtain that
  \begin{equation}\label{eq:apriori}
    \|z\|_{H^{1/2}(\Omega)} \leq c\|u\|_{L^2(\Gamma)}.
  \end{equation}
  Note that Theorems 5.3, 5.4 and Corollary 5.5 of the paper 
  \cite{JerisonKenig1995} by Jerison and Kenig also yield
  \begin{equation}\label{eq:apriori2}
    \|\delta^{1/2}\nabla z\|_{L^2(\Omega)^2}
    \leq c\|u\|_{L^2(\Gamma)}.
  \end{equation}

  The second step is to show that $z$ is the very weak solution, hence
  by uniqueness, we will get $y=z$. The regularity of $y$ and the
  estimate \eqref{eq:apriori0} follow from our first step.  For that
  last purpose, we use a density argument.  Indeed let $u_n\in
  H^1(\Gamma)$ be a sequence of functions such that
  \begin{equation}\label{eq:density}
    u_n\to u \hbox{ in } L^2(\Gamma), \hbox{ as }   n\to \infty.
  \end{equation}
  Consider $z_n={\cal K} (\frac12 I+K)^{-1} u_n$ and let $y_n\in
  L^2(\Omega)$ be the unique solution of \eqref{eq:veryweak2} with
  boundary datum $u_n$ and right hand side $f=0$ (that is in
  $H^1(\Omega)$). Then by the estimate \eqref{eq:apriori} and Lemma
  \ref{lem:existenceveryweak}, we get
  \begin{eqnarray*}
    y_n\to y \hbox{ in } L^2(\Omega), \hbox{ as }   n\to \infty,\\
    z_n \to z \hbox{ in } H^{1/2}(\Omega), \hbox{ as }   n\to \infty.
  \end{eqnarray*}
  Furthermore by Theorem 5.15 of \cite{JerisonKenig1995} $z_n$
  satisfies
  \[
    \gamma z_n= u_n \hbox{ on } \Gamma,
  \]
  where $\gamma$ is the trace operator from $H^1(\Omega)$ into
  $H^{1/2}(\Gamma)$.  Hence we directly deduce that $y_n=z_n$ and by
  the above convergence property we conclude that $y=z$.
\end{proof}

\begin{corollary}\label{cor:regularity}
  The unique solution $y\in L^2(\Omega)$ of problem
  \eqref{eq:veryweak2} with $u\in L^2(\Gamma)$ and $f\in
  H^{-1}(\Omega)$ belongs to $H^{1/2}(\Omega)$ and to $\widetilde
  W^{1,2}(\Omega)$ from \eqref{tildeW12}.  There exists a positive
  constant $c$ such that
  \begin{equation*}
    \|y\|_{H^{1/2}(\Omega)}+\|\delta^{1/2}\nabla y\|_{L^2(\Omega)^2}
    \leq c\left(\|u\|_{L^2(\Gamma)}+\|f\|_{H^{-1}(\Omega)}\right).
  \end{equation*}
\end{corollary}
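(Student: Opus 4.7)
My plan is to reduce the corollary to Lemma \ref{lem:regularity} by a linearity/superposition argument, splitting off the part of the solution coming from the right-hand side $f$. Specifically, I would write $y = y_1 + y_2$, where $y_1 \in L^2(\Omega)$ is the very weak solution of \eqref{eq:veryweak2} with data $(u,0)$ and $y_2 \in H^1_0(\Omega)$ is the standard weak solution of $-\Delta y_2 = f$ with zero boundary data. Lemma \ref{lem:existenceveryweak} applies to $y_1$ because $L^2(\Gamma) \hookrightarrow V_\Gamma'$ (via $V_\Gamma \hookrightarrow H^t(\Gamma)$ for some $t \in (0,\lambda-\tfrac12)$ together with $H^t(\Gamma) \hookrightarrow L^2(\Gamma)$), and to $y_2$ because $H^{-1}(\Omega) \hookrightarrow V'$ trivially from $V \subset H^1_0(\Omega)$.

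Next I would verify $y = y_1 + y_2$ by uniqueness: the remark following Lemma \ref{lem:existenceveryweak} (invoking the Green formula \eqref{eq:Green}) shows that the weak solution $y_2 \in H^1_0(\Omega)$ automatically satisfies the very weak formulation \eqref{eq:veryweak2} with data $(0,f)$. Hence, by linearity, $y_1 + y_2$ solves \eqref{eq:veryweak2} with data $(u,f)$, and the uniqueness part of Lemma \ref{lem:existenceveryweak} forces $y = y_1 + y_2$.

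For the regularity estimates I would then combine three ingredients. Lemma \ref{lem:regularity} directly yields $y_1 \in H^{1/2}(\Omega) \cap \widetilde{W}^{1,2}(\Omega)$ together with
\[
  \|y_1\|_{H^{1/2}(\Omega)} + \|\delta^{1/2}\nabla y_1\|_{L^2(\Omega)^2} \leq c\|u\|_{L^2(\Gamma)}.
\]
Standard Lax--Milgram and Poincaré give $\|y_2\|_{H^1_0(\Omega)} \leq c\|f\|_{H^{-1}(\Omega)}$; since $H^1_0(\Omega) \hookrightarrow H^{1/2}(\Omega)$ and $\delta \leq \operatorname{diam}(\Omega)$, this also controls $\|y_2\|_{H^{1/2}(\Omega)}$ and $\|\delta^{1/2}\nabla y_2\|_{L^2(\Omega)^2}$. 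A triangle inequality then produces the desired bound.

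I do not anticipate a serious obstacle; the decomposition completely isolates the new contribution (the right-hand side $f$) into the easy, classical $H^1_0$-case. The only point that needs a brief verification is the admissibility of $L^2(\Gamma)$ data for Lemma \ref{lem:existenceveryweak} in the non-convex case, which follows from the embedding statements in the remark after \eqref{jonny:normalV}; everything else is a direct assembly.
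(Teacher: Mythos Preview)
Your proposal is correct and is exactly the superposition argument the paper has in mind: the corollary is stated without proof immediately after Lemma~\ref{lem:regularity}, and your splitting $y=y_1+y_2$ (with $y_1$ handled by Lemma~\ref{lem:regularity} and $y_2\in H^1_0(\Omega)$ the standard weak solution for $f$) is the intended route. The only minor remark is that the admissibility of $u\in L^2(\Gamma)$ in Lemma~\ref{lem:existenceveryweak} need not be rechecked, since Lemma~\ref{lem:regularity} already applies directly to the pair $(u,0)$ with $u\in L^2(\Gamma)$.
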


\subsection{Method of transposition employing weighted Sobolev spaces}\label{sec:weighted}
Alternatively to the space $H^1_\Delta(\Omega)\cap H^1_0(\Omega)$, one can use the test space
\[
  V^{2,2}_\beta(\Omega)\cap H^1_0(\Omega), \quad \beta>1-\frac\pi\omega,
\] 
in the non-convex case, where $V^{2,2}_\beta(\Omega)$ is a
weighted Sobolev space of the class
\begin{align}
  \begin{split}\label{def:weightedSobolevSpace}
  V^{k,p}_\beta(\Omega) &:= 
  \left\{v\in\mathcal{D}'(\Omega):\|v\|_{V^{k,p}_\beta(\Omega)}<\infty\right\}, \\
  \|v\|_{V^{k,p}_\beta(\Omega)}^p &:=
  \sum_{|\alpha|\le k}\int_\Omega |r^{\beta-k+|\alpha|}D^\alpha v|^p,
  \end{split}
\end{align}
where we use standard multi-index notation.
For later use we also introduce
\[ 
  L^2_\beta(\Omega):=V^{0,2}_\beta(\Omega).
\]
First derivatives of $V^{2,2}_\beta(\Omega)$-functions belong to $V^{1,2}_{\beta}(\Omega)$ by definition. The trace space of
$V^{1,2}_{\beta}(\Omega)$ is $\prod_{j=1}^N V^{1/2,2}_{\beta}(\Gamma_j)$, see \cite[Lemma 1.2]{mazja:78}
or \cite[Theorem 1.31]{nicaise:93}.
In the next lemma, we will use the spaces
\[
  V_\beta:=\begin{cases}
      V^{2,2}_\beta(\Omega)\cap H^1_0(\Omega) & \text{for }\omega>\pi, \\
      H^2(\Omega)\cap H^1_0(\Omega) & \text{for }\omega<\pi,
    \end{cases} \qquad
  Y_\beta:=\begin{cases}
      L^2_{-\beta}(\Omega) & \text{for }\omega>\pi, \\
      L^2(\Omega) & \text{for }\omega<\pi,
    \end{cases}
\]
for $\beta\in(1-\frac\pi\omega,1]$. We endow $V_\beta$ with the
$V^{2,2}_\beta(\Omega)$-norm for $\omega>\pi$ and the
$H^2(\Omega)$-norm for $\omega<\pi$, as well as $Y_\beta$ with the
$L^2_{-\beta}(\Omega)$-norm for $\omega>\pi$ and the
$L^2(\Omega)$-norm for $\omega<\pi$.

\begin{remark}
  Let us discuss the definition of the space $V_\beta$ and the restriction of the weight $\beta$ to the interval $(1-\frac\pi\omega,1]$ in the non-convex case:
  \begin{enumerate}
    \item We require $\beta\in(1-\frac\pi\omega,1]$ in order to have the isomorphism
    \begin{equation}\label{jonny:Viso}
			\Delta v\in L^2_\beta(\Omega),\ v_{|\Gamma}=0\quad \Leftrightarrow\quad v \in V_\beta.
		\end{equation}
		\item It is possible to use the test space $V^{2,2}_{\beta}(\Omega)\cap H^1_0(\Omega)$ in convex domains as well. However, this implies a loss of information about the solution since this test space is smaller than $H^2(\Omega)\cap H^1_0(\Omega)$ due to the fact that the weight $\beta$ can be negative.
		\item Note that $V^{1/2,2}_{\beta}(\Gamma_j)\hookrightarrow L^2(\Gamma_j)$ for $\beta \leq \frac12$. This implies $L^2(\Gamma)=\left(\prod_{j=1}^N L^2(\Gamma_j)\right)'\hookrightarrow \left(\prod_{j=1}^N V^{1/2,2}_{\beta}(\Gamma_j)\right)'$.
		This means that $L^2$-boundary data are included in the following discussion if $\beta\leq \frac12$.
  \end{enumerate}
\end{remark}

\begin{lemma}\label{lem:existence_using_weighted_spaces}
  Let $\beta\in(1-\frac\pi\omega,1]$
  and assume that $f\in V_\beta'$ and $u\in \left(\prod_{j=1}^N
  V^{1/2,2}_{\beta}(\Gamma_j)\right)'$. Then there
  exists a unique solution 
  \begin{align}\label{eq:veryweakbeta}
    y\in Y_\beta\hookrightarrow L^2(\Omega):\quad (y, \Delta
    v)_\Omega = (u,\partial_n v)_\Gamma -
    (f,v)_\Omega \quad\forall v\in V_\beta
  \end{align}
  with
  \[
    \|y\|_{Y_\beta}\leq c\left(\|f\|_{V_\beta'}+\|u\|_{\left(\prod_{j=1}^N
  V^{1/2,2}_{\beta}(\Gamma_j)\right)'}\right).
  \]
\end{lemma}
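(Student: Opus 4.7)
The plan is to imitate the proof of Lemma~\ref{lem:existenceveryweak} by applying the Babu\v ska--Lax--Milgram theorem to the bilinear form $b(y,v):=(y,\Delta v)_\Omega$ on $Y_\beta\times V_\beta$ with the linear functional $\ell(v):=(u,\partial_n v)_\Gamma-(f,v)_\Omega$, the crucial substitution being the weighted isomorphism \eqref{jonny:Viso} in place of the unweighted one used there. The case $\omega<\pi$ coincides with the setting of Lemma~\ref{lem:MRV} and requires no new argument, so attention concentrates on $\omega>\pi$, where $V_\beta=V^{2,2}_\beta(\Omega)\cap H^1_0(\Omega)$ and $Y_\beta=L^2_{-\beta}(\Omega)$.

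Boundedness is straightforward: the weighted Cauchy--Schwarz inequality gives
\[
  |b(y,v)|=\bigl|(r^{-\beta}y,r^{\beta}\Delta v)_\Omega\bigr|\le\|y\|_{Y_\beta}\,\|\Delta v\|_{L^2_\beta(\Omega)}\le c\,\|y\|_{Y_\beta}\|v\|_{V_\beta},
\]
while the trace theorem for $V^{1,2}_\beta(\Omega)$ quoted after \eqref{def:weightedSobolevSpace} ensures $\|\partial_n v\|_{\prod_{j=1}^N V^{1/2,2}_\beta(\Gamma_j)}\le c\|v\|_{V_\beta}$, so that $\ell$ is a bounded linear functional on $V_\beta$ with the anticipated combined norm estimate in terms of $\|u\|$ and $\|f\|$.

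For the two inf--sup conditions I would exploit that, by \eqref{jonny:Viso}, the map $v\mapsto\Delta v$ is an isomorphism from $V_\beta$ onto $L^2_\beta(\Omega)$, so in particular $\|\Delta v\|_{L^2_\beta(\Omega)}$ is an equivalent norm on $V_\beta$. Given $v\in V_\beta$, the choice $y:=r^{2\beta}\Delta v$ lies in $Y_\beta$ and satisfies $\|y\|_{Y_\beta}=\|\Delta v\|_{L^2_\beta(\Omega)}$ together with $(y,\Delta v)_\Omega=\|\Delta v\|_{L^2_\beta(\Omega)}^2$, which delivers the first inf--sup. Given $y\in Y_\beta$, the function $g:=r^{-2\beta}y$ lies in $L^2_\beta(\Omega)$ with $\|g\|_{L^2_\beta(\Omega)}=\|y\|_{Y_\beta}$, so by \eqref{jonny:Viso} there is a unique $v\in V_\beta$ with $\Delta v=g$ and $\|v\|_{V_\beta}\le c\|y\|_{Y_\beta}$, yielding $b(y,v)=\|y\|_{Y_\beta}^2$ and hence the second inf--sup. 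Babu\v ska--Lax--Milgram then provides existence, uniqueness and the claimed a priori bound exactly as in Lemma~\ref{lem:existenceveryweak}.

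The substantive content hides in the isomorphism \eqref{jonny:Viso}, which is stated as known and reflects Kondrat'ev-type regularity near the re-entrant corner; the restriction $\beta\in(1-\pi/\omega,1]$ is exactly what makes this isomorphism hold. The remaining bookkeeping---pairing the weight $r^{2\beta}$ with $\Delta v$ and the weight $r^{-2\beta}$ with $y$ so that the dualities collapse to squared norms in the correct weighted $L^2$-spaces---is where one has to be careful, but beyond \eqref{jonny:Viso} no genuinely new difficulty arises.
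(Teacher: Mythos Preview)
Your proposal is correct and follows essentially the same route as the paper: Babu\v ska--Lax--Milgram on $Y_\beta\times V_\beta$, boundedness via weighted Cauchy--Schwarz, continuity of $\ell$ via the trace theorem for $V^{1,2}_\beta$, and the two inf--sup conditions obtained from the very choices $y=r^{2\beta}\Delta v$ and $\Delta v=r^{-2\beta}y$ together with the isomorphism \eqref{jonny:Viso}. The paper likewise dispatches the convex case by referring back to Lemma~\ref{lem:MRV}.
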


\begin{proof}
  The convex case was already treated in Lemma \ref{lem:MRV}, hence we
  focus on the non-convex case. We proceed as in Lemma \ref{lem:existenceveryweak}. The right hand
  side of \eqref{eq:veryweakbeta} defines a continuous functional on $V_\beta$.
  Furthermore, the bilinear form is bounded on $Y_\beta\times V_\beta$,
  \[ 
    \left|(y, \Delta v)_\Omega\right| \le \|r^{-\beta} y\|_{L^2(\Omega)} \,
    \|r^\beta \Delta v\|_{L^2(\Omega)} \le \|y\|_{Y_\beta}\,\|v\|_{V_\beta}.
  \]
  The inf-sup conditions for the bilinear form are proved  by using the isomorphism \eqref{jonny:Viso},
  in particular $\|v\|_{V_\beta}\le c\|\Delta v\|_{L^2_\beta(\Omega)}$;
  we obtain by taking $y=r^{2\beta}\Delta v$
  \begin{align*}
    \sup_{y\in Y_\beta} \frac{\left|(y,\Delta v)_\Omega\right|}{\|y\|_{Y_\beta}} &\ge
    \frac{(r^{\beta}\Delta v,r^{\beta}\Delta v)_\Omega}%
         {\|r^{2\beta}\Delta v\|_{L^2_{-\beta}(\Omega)}} =
    \frac{\|\Delta v\|_{L^2_\beta(\Omega)}^2}%
         {\|\Delta v\|_{L^2_{\beta}(\Omega)}} \ge c^{-1}\|v\|_{V_\beta},
  \end{align*}
  and by taking the solution $v\in V_\beta$ of $\Delta v=r^{-2\beta}y$ with
  $\|v\|_{V_\beta}\le c\|\Delta v\|_{L^2_\beta(\Omega)} =
  c\|y\|_{L^2_{-\beta}(\Omega)}$
  \begin{align}
    \sup_{v\in V_\beta} \frac{\left|(y,\Delta v)_\Omega\right|}{\|v\|_{V_\beta}}  &\ge
    \frac{(r^{-\beta}y,r^{-\beta}y)_\Omega}{c\|y\|_{L^2_{-\beta}(\Omega)}}=
    c^{-1}\|y\|_{L^2_{-\beta}(\Omega)}.\label{jonny:Vsup2}
  \end{align}
  The existence of the unique solution $y\in L^2_{-\beta}(\Omega)$ of problem
  \eqref{eq:veryweakbeta} follows now from the standard Babu\v ska--Lax--Milgram
  theorem, see for example \cite[Theorem 2.1]{Babuska1970}. The a~priori estimate is obtained with \eqref{jonny:Vsup2} and
  \eqref{eq:veryweakbeta},
  \[
    \|y\|_{Y_\beta}\leq c\sup_{v\in V_\beta} \frac{\left|(y,\Delta v)_\Omega\right|}{\|v\|_{V_\beta}}=c\sup_{v\in V_\beta} \frac{\left| (u,\partial_n v)_\Gamma -
    (f,v)_\Omega\right|}{\|v\|_{V_\beta}}.
  \]
	This ends the proof since we already noticed that the enumerator defines a continuous functional on $V_\beta$.
\end{proof}

\subsection{\label{sec:Berggren}Berggren's approach}
Berggren's approach \cite{Berggren2004} avoids test functions in $H^1_\Delta(\Omega)\cap H^1_0(\Omega)$ in an explicit way.
It can be explained as if we substitute $\varphi:=-\Delta v$ and
$\normaltrace:=\partial_n v$ in \eqref{eq:veryweak2},
\begin{align}\label{eq:berggren1}
  y\in L^2(\Omega): \quad (y, \varphi)_\Omega &=
  -(u,\normaltrace)_\Gamma + (f,v)_\Omega \quad
  \forall\varphi\in L^2(\Omega).
\end{align}
The relationship between $\varphi\in L^2(\Omega)$ and both
$v\in V$ and $\normaltrace \in V_\Gamma$ can be expressed by the weak formulation of the Poisson
equation,
\begin{align}\label{eq:berggren2}
  v\in V: \quad (\nabla v,\nabla\psi)_\Omega &=
  (\varphi,\psi)_\Omega \quad \forall\psi\in H^1_0(\Omega)
\end{align}
and a reformulation of the Green formula \eqref{eq:Green}
in the form
\begin{align}\label{eq:berggren3}
  \normaltrace\in V_\Gamma: \quad (\normaltrace,\chi)_\Gamma &=
  (\nabla v,\nabla\chi)_\Omega - (\varphi,\chi)_\Omega
  \quad \forall\chi\in H^1(\Omega)\setminus H^1_0(\Omega).
\end{align}

Note that Berggren's formulation is
not a system with three unknown functions since the second and third
equations compute actions on the test function $\varphi$. Indeed, let $S:L^2(\Omega)\rightarrow V$ and $F:L^2(\Omega)\rightarrow V_\Gamma$ be the solution operators of \eqref{eq:berggren2} and \eqref{eq:berggren3}, respectively, defined by $S\varphi:=v$ and $F\varphi:=\normaltrace$, then we could
also write
\begin{align*}
  y\in L^2(\Omega): \quad (y, \varphi)_\Omega =
  -(u,F\varphi)_\Gamma - (f,S\varphi)_\Omega \quad
  \forall\varphi\in L^2(\Omega)
\end{align*}
instead of \eqref{eq:berggren1}.

\begin{lemma}\label{jonny:berg}
  Berggren's formulation \eqref{eq:berggren1}, \eqref{eq:berggren2},
  \eqref{eq:berggren3} is equivalent to the formulation
  \eqref{eq:veryweak2}.
\end{lemma}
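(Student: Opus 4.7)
The plan is to recognize the auxiliary equations \eqref{eq:berggren2} and \eqref{eq:berggren3} as a disguised way of declaring $\varphi=-\Delta v$ and $\normaltrace=\partial_n v$. Once this identification is made, a simple substitution converts \eqref{eq:berggren1} into \eqref{eq:veryweak2} and vice versa. I would therefore first show that, for each $\varphi\in L^2(\Omega)$, the function $v$ defined by \eqref{eq:berggren2} is precisely the element of $V$ solving $-\Delta v=\varphi$ with homogeneous Dirichlet data, and that $\normaltrace$ defined by \eqref{eq:berggren3} coincides with its normal trace.

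For the first point, \eqref{eq:berggren2} is the standard weak formulation of the Poisson problem with right-hand side $\varphi\in L^2(\Omega)$ and zero boundary data; testing against $\psi\in\mathcal{D}(\Omega)$ gives $-\Delta v=\varphi$ in the distributional sense, so $v\in H^1_\Delta(\Omega)\cap H^1_0(\Omega)=V$, and the isomorphism $\Delta\colon V\to L^2(\Omega)$ already used in the proof of Lemma~\ref{lem:existenceveryweak} makes the assignment $\varphi\mapsto v$ a bijection. For the second point, combining \eqref{eq:berggren2} (extended to all $\chi\in H^1_0(\Omega)$) with Green's formula \eqref{eq:Green} yields, for every $\chi\in H^1(\Omega)$,
\[
  (\partial_n v,\chi)_\Gamma = (\nabla v,\nabla\chi)_\Omega + (\Delta v,\chi)_\Omega = (\nabla v,\nabla\chi)_\Omega - (\varphi,\chi)_\Omega,
\]
which is exactly the right-hand side of \eqref{eq:berggren3}. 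Since the trace operator $H^1(\Omega)\to H^{1/2}(\Gamma)$ is surjective, the element $\normaltrace\in V_\Gamma$ singled out by \eqref{eq:berggren3} is uniquely determined and must coincide with $\partial_n v$.

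With these identifications at hand, substituting $\varphi=-\Delta v$ and $\normaltrace=\partial_n v$ into \eqref{eq:berggren1} reproduces \eqref{eq:veryweak2} verbatim; conversely, given a solution of \eqref{eq:veryweak2} and any $\varphi\in L^2(\Omega)$, one recovers \eqref{eq:berggren1} by parameterizing $\varphi$ through the corresponding $v\in V$ and using $\normaltrace=\partial_n v$. There is no genuine analytic difficulty here; the only bookkeeping delicacy is that \eqref{eq:berggren3} is stated for test functions $\chi\in H^1(\Omega)\setminus H^1_0(\Omega)$, so one should read it as an equation in $V_\Gamma$ (the case $\chi\in H^1_0(\Omega)$ being redundant in view of \eqref{eq:berggren2}) before invoking the Green formula.
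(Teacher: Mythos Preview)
Your proposal is correct and follows essentially the same route as the paper: both arguments rest on recognizing that \eqref{eq:berggren2} encodes the bijection $\varphi\leftrightarrow v$ between $L^2(\Omega)$ and $V$, and that \eqref{eq:berggren3} together with Green's formula \eqref{eq:Green} forces $\normaltrace=\partial_n v$, after which \eqref{eq:berggren1} and \eqref{eq:veryweak2} are the same equation. Your version is slightly more explicit about why $\normaltrace$ is uniquely determined by \eqref{eq:berggren3} (via surjectivity of the trace), which the paper leaves implicit.
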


\begin{proof}
  We first assume that $y\in L^2(\Omega)$ satisfies
  \eqref{eq:veryweak2} and show
  \eqref{eq:berggren1}--\eqref{eq:berggren3}. For any $\varphi\in
  L^2(\Omega)$ let $v$ be the variational solution of
  $-\Delta v=\varphi$ defined by \eqref{eq:berggren2}, hence $v\in V$ and
  $\normaltrace:=\partial_n v \in V_\Gamma$. Based on the formula
  \eqref{eq:Green}, we obtain \eqref{eq:berggren3}. With
  \eqref{eq:veryweak2} we finally get also \eqref{eq:berggren1}.

  Let now $y$ satisfy \eqref{eq:berggren1}--\eqref{eq:berggren3}.
  Since $\varphi\in L^2(\Omega)$ we get from \eqref{eq:berggren2} that
  $v\in V$. Moreover, we obtain $\normaltrace=\partial_n v\in
  V_\Gamma$ from \eqref{eq:berggren3}. Hence equation
  \eqref{eq:berggren1} becomes
  \[
    -(y, \Delta v)_\Omega = -(u,\partial_n v)_\Gamma +
    (f,v)_\Omega 
    \quad\forall v\in V
  \]
  due to the isometry between $L^2(\Omega)$ and $V$.
\end{proof}
\begin{remark}
  Berggren used the regularity $v\in H^{3/2+\epsilon}(\Omega)$ with some $\epsilon>0$ which implies $\normaltrace\in H^{\epsilon}(\Gamma)$. However, he did not consider the maximal domain of the elliptic operator, i.e., $v \in V$ and $\normaltrace\in V_\Gamma$. But with the explanations of Subsection \ref{sec:general} these regularities should be obvious. Thus the result of Lemma \ref{jonny:berg} is slightly more general than that of Berggren.
\end{remark}

\subsection{\label{sec:regularizationstrategy}The regularization approach}

A further idea is to regularize the boundary data and then to apply
standard methods. This approach has already been considered within the proof 
of Lemma~\ref{lem:MRV}. In contrast, we do not use the isomorphism
\[
  \Delta w\in L^2(\Omega),\ w_{|\Gamma}=0\quad\Leftrightarrow\quad w\in H^2(\Omega)\cap H^1_0(\Omega),
\]
which can only be employed in case of convex domains, but the isomorphism
\[
	\Delta v\in L^2(\Omega),\ v_{|\Gamma}=0\quad \Leftrightarrow \quad v \in V.
\]
This allows us to apply the regularization approach in the non-convex case as well.
Moreover, we propose two different strategies how the regularized Dirichlet boundary
data can be constructed in an explicit way. Thereby we will be able in Subsection \ref{sec:numreg}
to calculate approximate solutions of the regularized problems based on a finite element method.
For the data we assume henceforth $u\in L^2(\Gamma)$ and $f\in H^{-1}(\Omega)$. This is not only for simplicity but
also due to the fact that already for Dirichlet boundary data in $L^2(\Gamma)$ the convergence rates of the approximate
solutions in Subsection \ref{sec:numreg} tend to zero as the maximal interior angle tends to $2\pi$.

We start with general convergence results for the regularized solutions.
To this end let $u^h\in H^{1/2}(\Gamma)$ be a sequence of functions such that
\begin{align*}
  \lim_{h\to0}\|u-u^h\|_{L^2(\Gamma)}=0.
\end{align*}
Let now $y^h\in Y_*^h:=\{v\in H^1(\Omega): v|_\Gamma=u^h\}$ be the
variational solution,
\begin{align}\label{eq:regsol}
  y^h\in Y_*^h: \quad (\nabla y^h,\nabla v)_\Omega =
  (f,v)_\Omega \quad\forall v\in H^1_0(\Omega).
\end{align}

\begin{lemma}\label{lem:exregsol}
  Let $u\in L^2(\Gamma)$ and $f\in H^{-1}(\Omega)$. Then the limit $y:=\lim\limits_{h\to0}y^h$ exists, belongs to $L^2(\Omega)$, and is
  the very weak solution, that means it satisfies \eqref{eq:veryweak2}.
\end{lemma}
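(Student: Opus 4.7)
The plan is to establish that the sequence $\{y^h\}$ is Cauchy in $L^2(\Omega)$ by invoking the a priori estimate from Lemma~\ref{lem:existenceveryweak}, then pass to the limit in the very weak formulation. First I would observe that for each $h>0$, since $u^h\in H^{1/2}(\Gamma)$ and $f\in H^{-1}(\Omega)$, the weak problem \eqref{eq:regsol} admits a unique solution $y^h\in H^1(\Omega)$. Applying the Green formula \eqref{eq:Green} with $\chi=y^h$ (exactly as remarked after Lemma~\ref{lem:existenceveryweak}), every such weak solution is also a very weak solution, i.e.
\[
  (y^h,\Delta v)_\Omega = (u^h,\partial_n v)_\Gamma - (f,v)_\Omega
  \qquad \forall v\in V.
\]

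Next, by linearity, $y^h-y^{h'}$ is the very weak solution associated with boundary datum $u^h-u^{h'}$ and zero right hand side, so Lemma~\ref{lem:existenceveryweak} yields
\[
  \|y^h-y^{h'}\|_{L^2(\Omega)} \le \|u^h-u^{h'}\|_{V_\Gamma'}.
\]
The key embedding to check is $L^2(\Gamma)\hookrightarrow V_\Gamma'$. This is supplied by point~3 of the Remark preceding Lemma~\ref{lem:existenceveryweak}: since $\lambda=\pi/\omega>1/2$, one has $V_\Gamma\hookrightarrow H^t(\Gamma)$ for some $t\in(0,\lambda-\tfrac12)$, hence, by duality together with the continuous embedding $H^t(\Gamma)\hookrightarrow L^2(\Gamma)$, we obtain $L^2(\Gamma)\hookrightarrow H^t(\Gamma)'\hookrightarrow V_\Gamma'$. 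Combined with $u^h\to u$ in $L^2(\Gamma)$, this shows that $\{y^h\}$ is Cauchy in $L^2(\Omega)$ and therefore converges to some $y\in L^2(\Omega)$.

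It remains to pass to the limit in the very weak identity for fixed $v\in V$. Since $\Delta v\in L^2(\Omega)$ and $y^h\to y$ in $L^2(\Omega)$, the left-hand side converges to $(y,\Delta v)_\Omega$. On the right, $\partial_n v\in V_\Gamma$ is fixed and the pairing $(u^h,\partial_n v)_\Gamma$ is the duality $V_\Gamma'\times V_\Gamma$; by the embedding $L^2(\Gamma)\hookrightarrow V_\Gamma'$ established above and $u^h\to u$ in $L^2(\Gamma)$, it converges to $(u,\partial_n v)_\Gamma$. The term $(f,v)_\Omega$ is independent of $h$. Hence $y$ satisfies \eqref{eq:veryweak2}, and by uniqueness of the very weak solution (Lemma~\ref{lem:existenceveryweak}), $y$ is the very weak solution associated to $u$ and $f$.

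The only nontrivial step is the verification of the embedding $L^2(\Gamma)\hookrightarrow V_\Gamma'$; once this is granted, everything else reduces to an application of Lemma~\ref{lem:existenceveryweak} and a routine density/limit argument. All ingredients needed (the characterization of $V_\Gamma$ including the singular contribution $\xi(r)r^{\lambda-1}$, the existence and uniqueness for \eqref{eq:veryweak2}, and the consistency between weak and very weak solutions for $H^{1/2}$-data) have already been set up in Subsection~\ref{sec:general}, so no additional machinery is required.
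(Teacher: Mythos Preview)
Your proof is correct and follows essentially the same strategy as the paper's, with a slightly more modular packaging. The paper derives the identity $(y^h-y^{h'},\Delta v)_\Omega=(u^h-u^{h'},\partial_n v)_\Gamma$ directly via Green's formula and then estimates $\|y^h-y^{h'}\|_{L^2(\Omega)}$ by taking $v=v_z$ with $\Delta v_z=z$ and using the regularity bound $\|\partial_n v_z\|_{L^2(\Gamma)}\le c\|v_z\|_{H^s(\Omega)}\le c\|z\|_{L^2(\Omega)}$ for some $s\in(\tfrac32,1+\lambda)$. You instead invoke the a~priori estimate of Lemma~\ref{lem:existenceveryweak} together with the embedding $L^2(\Gamma)\hookrightarrow V_\Gamma'$ from the preceding Remark. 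These are the same argument: the paper's regularity bound is precisely what establishes $V_\Gamma\hookrightarrow L^2(\Gamma)$ and hence, by duality, your embedding. Your route is marginally cleaner since it reuses Lemma~\ref{lem:existenceveryweak} rather than reproving its core estimate; the paper's version is more self-contained. For the passage to the limit, note that because $V_\Gamma\hookrightarrow L^2(\Gamma)$ you actually have $\partial_n v\in L^2(\Gamma)$, so the pairing $(u^h,\partial_n v)_\Gamma$ is simply the $L^2(\Gamma)$ inner product and the limit is immediate---no need to go through the $V_\Gamma'\times V_\Gamma$ duality, though what you wrote is also correct.
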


\begin{proof}
  First we show that $y^h$ is a Cauchy sequence in $L^2(\Omega)$.
  From (\ref{eq:regsol}) and Green's formula, we have for any $v\in
  V$,
  \begin{align*}
    (f,v)_\Omega = (\nabla y^h,\nabla v)_\Omega &=
    -( y^h,\Delta v)_\Omega + (y^h, \partial_n v)_\Gamma, \\
    (f,v)_\Omega = (\nabla y^{h'},\nabla v)_\Omega &=
    -( y^{h'},\Delta v)_\Omega + (y^{h'}, \partial_n v)_\Gamma.
  \end{align*}
  Hence due to $y^h=u^h$ and $y^{h'}=u^{h'}$ on $\Gamma$, we
  deduce that
  \begin{align}\label{eq:serge1a}
    ( y^h-y^{h'},\Delta v)_\Omega=(u^h-u^{h'}, \partial_n v)_\Gamma 
    \quad\forall v\in V.
  \end{align}
  Now for any $z\in L^2(\Omega)$, let $v_z\in V$ be such that
  \begin{align}\label{eq:sergepbDir}
    \Delta v_z=z,
  \end{align}
  that clearly satisfies
  \begin{align}\label{eq:serge2a}
    \|\partial_n v_z\|_{L^2(\Gamma)} \le 
    c\|v_z\|_{H^s(\Omega)}\le c \|z\|_{L^2(\Omega)}
  \end{align}
  with some $s\in\left(\frac32,1+\lambda\right)$, $s\le 2$. Finally, we obtain
  with  \eqref{eq:serge1a} and \eqref{eq:serge2a}
  \begin{align*}
    \|y^h-y^{h'}\|_{L^2(\Omega)} &=  \sup_{z\in L^2(\Omega), z\ne 0}
    \frac{(y^h-y^{h'}, z)_\Omega}{\|z\|_{L^2(\Omega)}} =
    \sup_{z\in L^2(\Omega), z\ne 0}
    \frac{(u^h-u^{h'}, \partial_n v_z)_\Gamma}{\|z\|_{L^2(\Omega)}} \\ &\le
    \|u^h-u^{h'}\|_{L^2(\Gamma)} \sup_{z\in L^2(\Omega), z\ne 0}
    \frac{\|\partial_n v_z\|_{L^2(\Gamma)}}{\|z\|_{L^2(\Omega)}} =
    c \|u^h-u^{h'}\|_{L^2(\Gamma)}.
  \end{align*}
  Since $u^h$ converges in $L^2(\Gamma)$, it is a Cauchy sequence and
  hence also $y^h$ is a Cauchy sequence and converges in $L^2(\Omega)$
  by the completeness of $L^2(\Omega)$.

  From $V\subset H^1_0(\Omega)$ we obtain by \eqref{eq:regsol} and
  the Green formula \eqref{eq:Green}
  \begin{align*}
    (f,v)_\Omega &= (\nabla y^h,\nabla v)_\Omega = 
    (\Delta v,y^h)_\Omega - (\partial_n v,y^h)_\Gamma \\ &= 
    (\Delta v,y^h)_\Omega - (\partial_n v,u^h)_\Gamma
    \quad\forall v\in V.     
  \end{align*}
  Since $\Delta v\in L^2(\Omega)$ and $\partial_n v\in L^2(\Gamma)$ we
  can pass to the limit and obtain that the limit function $y$
  satisfies \eqref{eq:veryweak2}.
\end{proof}

We can estimate the regularization error by a similar technique.
\begin{lemma}\label{lem:regularizationerror}
  Let $s=\frac12$ if $\Omega$ is convex and $s\in[0,\lambda-\frac12)$
  if $\Omega$ is non-convex.  Then the estimate
  \begin{align*}
    \|y-y^h\|_{L^2(\Omega)}\le c \|u-u^h\|_{H^{-s}(\Gamma)}
  \end{align*}
  holds.
\end{lemma}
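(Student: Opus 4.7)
The plan is to mimic the strategy used in the proof of Lemma~\ref{lem:exregsol}, but refine the duality argument so that the boundary norm on $u-u^h$ appears in a weaker (more singular) space, compensated by a stronger regularity claim on the test function's normal trace.

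First I would subtract the two formulations. By Lemma~\ref{lem:exregsol} the very weak solution $y$ satisfies \eqref{eq:veryweak2}. On the other hand, applying the Green formula \eqref{eq:Green} to $y^h\in H^1(\Omega)$ (with $y^h|_\Gamma=u^h$) solving \eqref{eq:regsol} gives, for every $v\in V\subset H^1_0(\Omega)$,
\[
  (y^h,\Delta v)_\Omega = (u^h,\partial_n v)_\Gamma - (f,v)_\Omega.
\]
Subtracting from \eqref{eq:veryweak2} yields the clean identity
\[
  (y-y^h,\Delta v)_\Omega = (u-u^h,\partial_n v)_\Gamma\qquad\forall v\in V.
\]

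Next I would use the duality characterization of the $L^2$-norm and the surjectivity of $\Delta:V\to L^2(\Omega)$. For arbitrary $z\in L^2(\Omega)\setminus\{0\}$, let $v_z\in V$ be the unique solution of $\Delta v_z=z$, $v_z|_\Gamma=0$. The isomorphism used in Section~\ref{sec:general} gives $\|v_z\|_V\le c\|z\|_{L^2(\Omega)}$, and combining this with \eqref{jonny:normalV} and the embedding $V_\Gamma\hookrightarrow H^s(\Gamma)$ (valid for $s=1/2$ in the convex case and for $s<\lambda-\frac12$ in the non-convex case, see the remark following \eqref{eq:H1Delta}) yields
\[
  \|\partial_n v_z\|_{H^s(\Gamma)}\le c\|v_z\|_V\le c\|z\|_{L^2(\Omega)}.
\]
Plugging $v=v_z$ into the identity above and estimating the boundary pairing by the duality between $H^{-s}(\Gamma)$ and $H^s(\Gamma)$ gives
\[
  (y-y^h,z)_\Omega = (u-u^h,\partial_n v_z)_\Gamma \le \|u-u^h\|_{H^{-s}(\Gamma)}\|\partial_n v_z\|_{H^s(\Gamma)} \le c\|u-u^h\|_{H^{-s}(\Gamma)}\|z\|_{L^2(\Omega)}.
\]
Taking the supremum over $z\in L^2(\Omega)\setminus\{0\}$ produces the claimed bound.

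The only delicate point is justifying the regularity $\partial_n v_z\in H^s(\Gamma)$ with the stated range of $s$: in the non-convex case $v_z$ carries a singular term of the form $r^{\lambda}\sin(\lambda\theta)$, whose normal trace on the sides adjacent to the corner behaves like $r^{\lambda-1}$, and this belongs to $H^s(\Gamma)$ precisely for $s<\lambda-\frac12$. This is exactly the content of the characterization of $V_\Gamma$ recalled in the remark in Subsection~\ref{sec:general}, so the argument goes through without further work; everything else is routine.
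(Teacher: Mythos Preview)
Your proof is correct and follows essentially the same route as the paper: derive the error identity $(y-y^h,\Delta v)_\Omega=(u-u^h,\partial_n v)_\Gamma$ for all $v\in V$, insert the solution $v_z$ of $\Delta v_z=z$, and bound $\|\partial_n v_z\|_{H^s(\Gamma)}\le c\|z\|_{L^2(\Omega)}$. The only cosmetic difference is that the paper writes this last bound via the intermediate regularity $v_z\in H^{s+3/2}(\Omega)$, whereas you invoke the equivalent abstract embedding $V_\Gamma\hookrightarrow H^s(\Gamma)$ from the remark in Subsection~\ref{sec:general}.
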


\begin{proof}
  We use the approach of the proof of Lemma \ref{lem:exregsol} and just
  replace $u^{h'}$ by $u$ and $y^{h'}$ by $y$ to get
  \begin{align}\label{eq:serge1}
    ( y-y^h,\Delta v)_\Omega=(u-u^h, \partial_n v)_\Gamma\quad 
    \forall v\in V.
  \end{align}
  Again, for any $z\in L^2(\Omega)$, we let $v_z\in V$ be such that
  $\Delta v_z=z$ but estimate now in a sharper way
  \begin{align}\label{eq:serge2}
    \|\partial_n v_z\|_{H^s(\Gamma)} \le 
    c\|v_z\|_{H^{s+3/2}(\Omega)}\le c \|z\|_{L^2(\Omega)}
  \end{align}
  As in the previous proof we get
  \begin{align*}
    \|y-y^h\|_{L^2(\Omega)}&=  \sup_{z\in L^2(\Omega), z\ne 0}
    \frac{(u-u^h,\partial_n v_z)_\Omega}{\|z\|_{L^2(\Omega)}} \\ &\leq
    \|u-u^h\|_{H^{-s}(\Gamma)} \sup_{z\in L^2(\Omega), z\ne 0}
    \frac{\|\partial_n v_z\|_{H^s(\Gamma)}}{\|z\|_{L^2(\Omega)}} \\&\le c
    \|u-u^h\|_{H^{-s}(\Gamma)}.
  \end{align*}
  Actually, the proof is for $s>0$ but of course the statement
  holds when $s$ is decreased.
\end{proof}

A choice for the construction of the regularized function $u^h$ could
be the use of the $L^2(\Gamma)$-projection $\Pi_hu$ into a piecewise
polynomial space on the boundary (which we call $Y_h^\partial$ in
Section \ref{sect:num}) or the use of the Carstensen interpolant
$C_hu$, see \cite{Carstensen1999}. Namely, if ${\mathcal N}_{\Gamma}$
is the set of nodes of the triangulation on the boundary, we set
\[
  C_hu=\sum_{x\in {\mathcal N}_{\Gamma}} \pi_x(u)\lambda_x,
\]
where $\lambda_x$ is the standard hat function related to $x$ and
\[
  \pi_x(u)=\frac{\int_{\Gamma} u\lambda_x} {\int_{\Gamma} \lambda_x}
=\frac{(u,\lambda_x)_\Gamma}{(1,\lambda_x)_\Gamma}.
\]
The advantages of the interpolant in comparison with the
$L^2$-projection are its local definition and the property
\[ u\in[a,b]\quad\Rightarrow\quad C_hu\in[a,b],\] 
see \cite{ReyesMeyerVexler2008};
a disadvantage for our application in optimal control is that
$C_hu_h\not=u_h$ for piecewise linear $u_h$.
We prove now regularization error estimates for the case that the
regularized function $u^h$ is constructed via $C_hu$ or $\Pi_hu$.

\begin{lemma}\label{lem:regularizationerrorforu}
  If $u^h$ is the piecewise linear Carstensen interpolant of $u$
  or the $L^2(\Gamma)$-projection of $u$ into a space of piecewise
  linear functions, then there holds
  \[
    \|u-u^h\|_{H^{-s}(\Gamma)}\le ch^s\|u\|_{L^2(\Gamma)},
    \quad s\in[0,1]
  \]
  as well as
  \[
    \|u^h\|_{L^2(\Gamma)}\le c\|u\|_{L^2(\Gamma)}.
  \]
\end{lemma}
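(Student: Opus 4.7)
The plan is to treat the two operators $\Pi_h$ and $C_h$ in parallel, first establishing the $L^2$-stability, then the approximation bound at the endpoints $s=0$ and $s=1$, and finally appealing to interpolation for $s\in(0,1)$.

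The $L^2$-stability of the orthogonal projection $\Pi_h$ is built in. For the Carstensen operator $C_h$, I would apply Cauchy--Schwarz to each nodal value,
\[
  |\pi_x(u)|^2\,(1,\lambda_x)_\Gamma
    = \frac{|(u,\lambda_x)_\Gamma|^2}{(1,\lambda_x)_\Gamma}
    \le \|u\|_{L^2(\omega_x)}^2,
\]
with $\omega_x:=\operatorname{supp}\lambda_x$, and then sum $\|C_hu\|_{L^2(\Gamma)}^2=\int_\Gamma\bigl(\sum_x\pi_x(u)\lambda_x\bigr)^2$ using the partition of unity $\sum_x\lambda_x\equiv 1$, the finite overlap of the patches (bounded by two because $\Gamma$ is one-dimensional), and the equivalence $\int_\Gamma\lambda_x^2\sim\int_\Gamma\lambda_x$ on the quasi-uniform boundary mesh.

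Given stability, the case $s=0$ of the approximation estimate follows directly from the triangle inequality. For $s=1$ the strategy is duality: write $\|u-u^h\|_{H^{-1}(\Gamma)}$ as a supremum over $\phi\in H^1(\Gamma)$ and estimate $(u-u^h,\phi)_\Gamma$ by inserting a piecewise linear approximation of $\phi$. For the $L^2$-projection this is classical: Galerkin orthogonality $(u-\Pi_hu,\phi_h)_\Gamma=0$ allows us to replace $\phi$ by $\phi-\phi_h$, and taking $\phi_h$ as the nodal linear interpolant we invoke $\|\phi-\phi_h\|_{L^2(\Gamma)}\le ch\|\phi\|_{H^1(\Gamma)}$ together with the already proven $L^2$-stability of $\Pi_h$ to pick up a factor $h$.

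For $C_h$, Galerkin orthogonality fails and this is where the main work lies. The replacement is the nodal orthogonality $(u-\pi_x(u),\lambda_x)_\Gamma=0$ baked into the definition of $\pi_x(u)$. I would decompose
\[
  (u-C_hu,\phi)_\Gamma
    =\sum_x\int_\Gamma (u-\pi_x(u))\,\lambda_x\,\phi
\]
via the partition of unity and, patchwise, subtract the mean $\bar\phi_x$ of $\phi$ over $\omega_x$; the subtraction is free thanks to the orthogonality. A patchwise Cauchy--Schwarz, together with the crude bound $\|u-\pi_x(u)\|_{L^2(\omega_x)}\le 2\|u\|_{L^2(\omega_x)}$, the scaled Poincar\'e inequality $\|\phi-\bar\phi_x\|_{L^2(\omega_x)}\le ch\|\phi\|_{H^1(\omega_x)}$ on the one-dimensional patch, and finite overlap then deliver $\|u-C_hu\|_{H^{-1}(\Gamma)}\le ch\|u\|_{L^2(\Gamma)}$. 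The result for $s\in(0,1)$ follows by interpolation between $L^2(\Gamma)$ and $H^{-1}(\Gamma)$. The principal obstacle I expect is the Carstensen case: one must justify the Poincar\'e inequality with the correct $\mathcal{O}(h)$ scaling on patches $\omega_x$ that may straddle a corner of $\Gamma$, but since the mesh is conforming and quasi-uniform and each $\omega_x$ is a connected one-dimensional arc, a standard scaling argument applies.
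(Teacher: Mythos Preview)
Your proposal is correct and follows essentially the same approach as the paper: $L^2$-stability via Cauchy--Schwarz on the nodal values, then duality using the nodal orthogonality $(u-\pi_x(u),\lambda_x)_\Gamma=0$ to subtract a constant (you choose the arithmetic mean $\bar\phi_x$, the paper chooses $\pi_x(\varphi)$; both are admissible), followed by a patchwise Poincar\'e estimate. The only organizational differences are that the paper reduces the $\Pi_h$ case to the $C_h$ case via the best-approximation property $\|\varphi-\Pi_h\varphi\|_{L^2(\Gamma)}\le\|\varphi-C_h\varphi\|_{L^2(\Gamma)}$, and that it interpolates on the $\varphi$ side (proving $\|\varphi-C_h\varphi\|_{L^2(\Gamma)}\le ch^s\|\varphi\|_{H^s(\Gamma)}$ first and then dualizing for all $s$ at once) rather than interpolating between $L^2(\Gamma)$ and $H^{-1}(\Gamma)$ as you do.
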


\begin{proof}
  The interpolation error estimate and stability result are derived in
  \cite{Carstensen1999,ReyesMeyerVexler2008} for domains. The proofs
  can be transferred to estimates on the boundary $\Gamma$. For the
  sake of completeness we sketch these proofs in the Appendix, see
  Lemma \ref{lem:carstensen}.

  In the case of the $L^2$-projection, the second estimate holds with
  constant one.  For the first estimate, we notice that by using the
  properties of the $L^2$-projection, we have
  \begin{align*}
    (u-\Pi_hu, \varphi)_\Gamma &= 
    (u-\Pi_hu, \varphi-\Pi_h\varphi)_\Gamma =
    (u, \varphi-\Pi_h\varphi)_\Gamma \le
    \|u\|_{L^2(\Gamma)}\|\varphi-\Pi_h\varphi\|_{L^2(\Gamma)}\\ & \le 
    \|u\|_{L^2(\Gamma)}\|\varphi-C_h\varphi\|_{L^2(\Gamma)} \le 
    ch^s\|u\|_{L^2(\Gamma)}\|\varphi\|_{H^{s}(\Gamma)}
  \end{align*}
  where we used Lemma \ref{lem:carstensen} in the last step.
  We conclude 
  \[
    \|u-u^h\|_{H^{-s}(\Gamma)}=
    \sup_{\varphi\in H^{s}(\Gamma), \varphi\ne 0}
    \frac{(u-\Pi_hu, \varphi)_\Gamma}{\|\varphi\|_{H^{s}(\Gamma)}} \le
    ch^s\|u\|_{L^2(\Gamma)}.
  \]
  which is the assertion.
\end{proof}

By setting $s=0$ in the previous lemma we obtain \[\|u-u^h\|_{L^2(\Gamma)} \leq c \|u\|_{L^2(\Gamma)}\] for the different choices of the regularized function $u^h$. This means that for $u\in L^2(\Gamma)$ the
difference $u-u^h$ is uniformly bounded in $L^2(\Gamma)$ independent of $h$.
However, we require strong convergence in $L^2(\Gamma)$ for $u\in L^2(\Gamma)$, i.e. \[\lim_{h\to0}\|u-u^h\|_{L^2(\Gamma)}=0.\]
This is subject of the next lemma. A comparable result for the Ritz-projection can be found in e.g. \cite[Theorem 3.2.3]{ciarlet:78}.
\begin{lemma}
 Let $u\in L^2(\Gamma)$ and let $u^h$ be the piecewise linear Carstensen interpolant of $u$
  or the $L^2(\Gamma)$-projection of $u$ into a space of piecewise
  linear functions. Then there holds
  \[
   \lim_{h\to0}\|u-u^h\|_{L^2(\Gamma)}=0.
  \]
\end{lemma}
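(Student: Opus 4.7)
The plan is a standard density/stability argument in the style of the Banach--Steinhaus principle: we approximate $u$ by a smoother function on which the interpolation error is explicitly controllable, and we use the uniform $L^2(\Gamma)$-stability of the operators $u\mapsto u^h$ already recorded in Lemma \ref{lem:regularizationerrorforu}.

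More precisely, let $\varepsilon>0$. Since $H^1(\Gamma)$ is dense in $L^2(\Gamma)$, I would first pick $\tilde u\in H^1(\Gamma)$ with $\|u-\tilde u\|_{L^2(\Gamma)}\le\varepsilon$ and denote by $\tilde u^h$ the Carstensen interpolant, respectively the $L^2(\Gamma)$-projection, of $\tilde u$ into the space of piecewise linear functions on the boundary mesh. By linearity of the operator $u\mapsto u^h$ and the uniform bound $\|u^h\|_{L^2(\Gamma)}\le c\|u\|_{L^2(\Gamma)}$ from Lemma \ref{lem:regularizationerrorforu}, we obtain
\[
 \|u^h-\tilde u^h\|_{L^2(\Gamma)}\le c\|u-\tilde u\|_{L^2(\Gamma)}\le c\varepsilon.
\]
A triangle inequality then reduces the problem to showing that $\|\tilde u-\tilde u^h\|_{L^2(\Gamma)}\to 0$ for the smoother function $\tilde u\in H^1(\Gamma)$.

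For this remaining term I would invoke a standard interpolation error estimate of the form $\|\tilde u-\tilde u^h\|_{L^2(\Gamma)}\le c h\,\|\tilde u\|_{H^1(\Gamma)}$. For the Carstensen interpolant this is exactly the content of Lemma \ref{lem:carstensen} in the Appendix (with $s=1$ after a standard duality/scaling argument, or directly by the elementwise stability and the fact that $C_h$ reproduces constants); for the $L^2(\Gamma)$-projection it follows immediately from its best-approximation property together with the corresponding bound for $C_h$. Consequently, for all $h$ sufficiently small we have $\|\tilde u-\tilde u^h\|_{L^2(\Gamma)}\le\varepsilon$, and combining the three contributions yields $\|u-u^h\|_{L^2(\Gamma)}\le (c+2)\varepsilon$, which proves the claim.

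The main obstacle, if any, is purely bookkeeping: making sure that an $H^1(\Gamma)$-to-$L^2(\Gamma)$ interpolation error estimate for the Carstensen operator on the boundary mesh is indeed available. Since Lemma \ref{lem:carstensen} provides the analogous stability/approximation properties in negative norms on $\Gamma$, and the local character of $C_h$ together with reproduction of constants is inherited, this step is routine rather than substantive. Everything else — the density of $H^1(\Gamma)$ in $L^2(\Gamma)$ and the uniform $L^2$-stability — is either classical or has already been established.
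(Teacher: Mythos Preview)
Your proposal is correct and is essentially the same density/stability argument as the paper's own proof: approximate $u$ by a function in $H^1(\Gamma)$, control the smooth part via the $L^2(\Gamma)$--$H^1(\Gamma)$ interpolation error estimate (Lemma~\ref{lem:ChIntErrEst} or Lemma~\ref{lem:carstensen} with $s=0$, $t=1$), and handle the rough remainder via the uniform $L^2(\Gamma)$-stability from Lemma~\ref{lem:regularizationerrorforu}. The only cosmetic difference is that the paper phrases the density step via a sequence $u_n$ rather than a single $\tilde u$, and slightly misattributes the $h$-estimate to Lemma~\ref{lem:regularizationerrorforu}; your referencing is in fact more precise.
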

\begin{proof}
We show the validity of this lemma for the Carstensen interpolant. The convergence result for the $L^2(\Gamma)$-projection can be proven analogously.

Due to the compact embedding $H^{1}(\Gamma) \stackrel{c}{\hookrightarrow} L^2(\Gamma)$ there exists a sequence of functions $u_n\in H^{1}(\Gamma)$ such that
\[
 \lim_{n\to\infty}\|u-u_n\|_{L^2(\Gamma)}=0
\]
with
\begin{equation}\label{jonny:c_n}
 \|u_n\|_{H^{1}(\Gamma)}\leq c_n,
\end{equation}
where the constant $c_n$ may depend on $n$. Thus, for every $\varepsilon>0$ there is a positive integer $N$ such that
\begin{equation}\label{jonny:varepsilon2}
 \|u-u_n\|_{L^2(\Gamma)}\leq \frac\varepsilon2
\end{equation}
for $n\geq N$. By inserting the function $u_n$ and its Carstensen interpolant $C_hu_n$ as intermediate functions into the desired term we obtain
\[
 \|u-C_hu\|_{L^2(\Gamma)}\leq \|u-u_n\|_{L^2(\Gamma)}+\|u_n-C_hu_n\|_{L^2(\Gamma)}+\|C_h(u_n-u)\|_{L^2(\Gamma)}.
\]
The results of Lemma \ref{lem:regularizationerrorforu} and the inequalities \eqref{jonny:c_n} and \eqref{jonny:varepsilon2} imply
\[
 \|u-C_hu\|_{L^2(\Gamma)}\leq c\left(\frac\varepsilon2 + h c_n\right).
\]
Since for every constant $c_n$ there is a parameter $h_n$ such that
$hc_n\leq \frac\varepsilon2$ for all $h\leq h_n$ we arrive at
\[
 \|u-C_hu\|_{L^2(\Gamma)}\leq c\varepsilon
\]
and the desired result follows.
\end{proof}

\subsection{A negative result}\label{sec:negative}

Since the boundary datum $u\in L^2(\Gamma)\hookrightarrow L^1(\Gamma)$
is trace of a function $w\in W^{1,1}(\Omega)$ an idea could be to
search $y=y_0+w$ with $y_0|_\Gamma=0$ and
\begin{align*}
  y_0\in W^{1,1}_0(\Omega):\quad (\nabla y_0,\nabla v)_\Omega =
  (f,v)_\Omega- (\nabla w,\nabla v)_\Omega =: ( F,v)_\Omega \quad
  \forall v\in W^{1,\infty}_0(\Omega).
\end{align*}
However, bilinear forms $a(u,v):W^{1,p}_0(\Omega)\times
W^{1,p'}_0(\Omega)\to\mathbb{C}$
are investigated in \cite{Simader1972,SimaderSohr1996} and it is shown that
\begin{align*}
  \forall F\in W^{1,p'}(\Omega)^*\quad \exists z\in W^{1,p}_0(\Omega):\quad
  a(z,v)=( F,v)_\Omega \quad \forall v\in W^{1,p'}_0(\Omega)
\end{align*}
holds for $p\in(1,\infty)$ only since $c_p\to\infty$ for $p\to1$ in
the inf-sup condition
\begin{align*}
  c_p\sup\limits_{\phi\in W^{1,p'}_0(\Omega)} \frac{(\nabla
    z,\nabla\phi)_\Omega}{\|\nabla\phi\|_{L^{p'}(\Omega)}} \ge \|\nabla
  z\|_{L^{p}(\Omega)} \quad\text{for }1<p<\infty.
\end{align*}

\section{\label{sect:num}Discretization of the boundary value problem}

Let $\left(\mathcal{T}_h\right)_{h>0}$ be a family of conforming, quasi-uniform finite
element meshes, and introduce the finite element spaces
\begin{align}\label{eq:discretespaces}
  Y_h = \{v_h\in H^1(\Omega): v_h|_T\in\mathcal{P}_1\ \forall
  T\in\mathcal{T}_h\}, \quad Y_{0h} = Y_h\cap H^1_0(\Omega),\quad
  Y_h^\partial = Y_h|_{\partial\Omega}.
\end{align}

\subsection{Berggren's approach}\label{sec:numberg}

Let $u\in L^2(\Gamma)$ and $f\in H^{-1}(\Omega)$. We discretize the formulation \eqref{eq:berggren1},
\eqref{eq:berggren2}, \eqref{eq:berggren3} in a straightforward
manner,
\begin{align*}
  &y_h\in Y_h: & (y_h, \varphi_h)_\Omega &=
  -(u,\normaltrace_h)_\Gamma + (f,v_h)_\Omega &&
  \forall\varphi_h\in Y_h, \\
  &v_h\in Y_{0h}: &\hspace{-0.5em} (\nabla
  v_h,\nabla\psi_h)_\Omega &= (\varphi_h,\psi_h)_\Omega
  &&
  \forall\psi_h\in Y_{0h}, \\
  &\normaltrace_h\in Y_h^\partial: & (\normaltrace_h,\chi_h)_\Gamma &=
  (\nabla v_h,\nabla\chi_h)_\Omega -
  (\varphi_h,\chi_h)_\Omega && \forall\chi_h\in Y_h\setminus
  Y_{0h}.
\end{align*}
Note that $\normaltrace_h\neq F\varphi_h$ and $v_h\neq S\varphi_h$ with $F$ and $S$ from Subsection \ref{sec:Berggren}, hence it is
only an approximate Galerkin formulation.

Berggren showed in Theorem 5.2 of \cite{Berggren2004} that this formulation
is equivalent with the standard finite element approximation with
$L^2$-projection of the boundary data, 
\begin{align}\label{eq:dicretizationBerggren}
  &y_h\in Y_h: & (\nabla y_h,\nabla\varphi_h)_\Omega &=
  (f,\varphi_h)_\Omega &&  \forall\varphi_h\in Y_{0h}, \\
  && (y_h,\varphi_h)_\Gamma &= (u,\varphi_h)_\Gamma &&
  \forall\varphi_h\in Y_h^\partial.
\end{align}
Note that $y_h=u_h$ on $\Gamma$ when $u=u_h\in Y_h^\partial$. This
will be of interest in the discretization of optimal control
problems.

Berggren proved also the following discretization error estimate, see
Theorem 5.5 of~\cite{Berggren2004}.

\begin{lemma}
  Let $\omega$ be the maximal interior angle of the domain $\Omega$,
  and denote by $\lambda=\pi/\omega$ the corresponding singularity
  exponent.  Let $s'\in(0,\frac12]$ be a real number with
  $s'<\lambda-\frac12$, and let $s\in[0,s')$ be a further real number.
  Then the error estimate
  \begin{align*}
    \|y-y_h\|_{L^2(\Omega)}\le c\left( h^s\|y\|_{H^s(\Omega)} +
    h^{s'}\|u\|_{L^2(\Gamma)} + h^{s'+1/2}\|f\|_{H^{-1}(\Omega)}\right) =
    \mathcal{O}(h^s)
  \end{align*}
  holds, this means that we have convergence order $s$,
  \[
    s=\min\left\{\tfrac12,\lambda-\tfrac12\right\}-\varepsilon=
    \begin{cases}
      \tfrac12-\varepsilon & \text{for convex domains,} \\
      \lambda-\tfrac12-\varepsilon & \text{for non-convex domains,}
    \end{cases}
  \]
  $\varepsilon>0$ arbitrary.
\end{lemma}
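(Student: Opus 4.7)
My approach is to identify $y_h$ as the standard Ritz--Galerkin finite element approximation of a continuous auxiliary problem. The second equation in \eqref{eq:dicretizationBerggren} is exactly the definition of $y_h|_\Gamma$ as the $L^2(\Gamma)$-projection $u^h:=\Pi_h u$ of $u$ onto the piecewise linear boundary space $Y_h^\partial$; the first equation is then Galerkin orthogonality with respect to the continuous weak solution $y^h\in H^1(\Omega)$ of $-\Delta y^h=f$ in $\Omega$, $y^h=u^h$ on $\Gamma$. The triangle inequality splits
\begin{equation*}
\|y-y_h\|_{L^2(\Omega)}\leq\|y-y^h\|_{L^2(\Omega)}+\|y^h-y_h\|_{L^2(\Omega)}.
\end{equation*}

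For the regularization error I would combine Lemma~\ref{lem:regularizationerror} with parameter $s'$ (admissible since $s'<\lambda-\tfrac12$) and Lemma~\ref{lem:regularizationerrorforu} to obtain $\|y-y^h\|_{L^2(\Omega)}\leq c h^{s'}\|u\|_{L^2(\Gamma)}$. For the Galerkin error, the difference $e_h:=y^h-y_h$ lies in $H^1_0(\Omega)$ and is Galerkin orthogonal, so the Aubin--Nitsche duality trick is available: let $w\in H^1_0(\Omega)$ solve $-\Delta w=e_h$; since $s'+\tfrac12<\lambda$, the shift theorem for the Dirichlet Poisson problem on the polygon gives $w\in H^{3/2+s'}(\Omega)$ with $\|w\|_{H^{3/2+s'}(\Omega)}\leq c\|e_h\|_{L^2(\Omega)}$, and Galerkin orthogonality together with the standard Lagrange interpolation estimate yields
\begin{equation*}
\|e_h\|_{L^2(\Omega)}\leq c\,h^{s'+1/2}\,\|\nabla e_h\|_{L^2(\Omega)}.
\end{equation*}

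The remaining task is to bound the energy error $\|\nabla e_h\|_{L^2(\Omega)}$, and this is where I expect the main difficulty. By C\'ea's lemma it reduces to a best approximation of $y^h$ in $H^1$ among discrete functions matching $u^h$ on $\Gamma$, but the $H^{1+\tau}$-norm of $y^h$ can blow up like $h^{-1/2-\tau}\|u\|_{L^2(\Gamma)}$ in the boundary contribution, since the inverse estimate on the quasi-uniform boundary mesh only yields $\|\Pi_h u\|_{H^{1/2+\tau}(\Gamma)}\leq c\,h^{-1/2-\tau}\|u\|_{L^2(\Gamma)}$. To overcome this I would split $y^h=y^h_f+y^h_u$ into the source and boundary contributions and treat each separately: standard $H^1$-stability of $y^h_f-y_{f,h}$ combined with the duality factor $h^{s'+1/2}$ produces the term $h^{s'+1/2}\|f\|_{H^{-1}(\Omega)}$, while for $y^h_u$ an interpolation argument applied to the limit $y_u$ (handling the remainder $y^h_u-y_u$ via the regularization estimate) generates the $h^s\|y\|_{H^s(\Omega)}$ best-approximation term, with the inverse-estimate loss in the boundary piece precisely balanced by the duality factor to yield $h^{s'}\|u\|_{L^2(\Gamma)}$. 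Finally, the $\mathcal{O}(h^s)$ convergence rate follows by absorbing $\|y\|_{H^s(\Omega)}\leq c\bigl(\|u\|_{L^2(\Gamma)}+\|f\|_{H^{-1}(\Omega)}\bigr)$ for $s<\tfrac12$ via Corollary~\ref{cor:regularity}, using that $s\leq s'\leq s'+\tfrac12$.
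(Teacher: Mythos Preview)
Your approach is sound, and in fact the paper does not give its own proof of this lemma at all: it simply cites Theorem~5.5 of Berggren~\cite{Berggren2004}. What you have sketched is essentially the argument the paper develops independently in Subsection~\ref{sec:numreg} for its own (slightly sharper) estimate, Lemma~\ref{lem:discrerrorestconvex} and Corollary~\ref{lem:globalerrorestconvex}: identify Berggren's $y_h$ with the regularization discretization via $u^h=\Pi_h u$, split $y-y_h=(y-y^h)+(y^h-y_h)$, bound the first piece by Lemmas~\ref{lem:regularizationerror} and~\ref{lem:regularizationerrorforu}, and treat the second by Aubin--Nitsche duality combined with a crude energy bound on $\nabla e_h$.

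Where your outline becomes unnecessarily involved is the energy bound for $\|\nabla e_h\|_{L^2(\Omega)}$. You do not need to split into source and boundary contributions, invoke the limit $y_u$, or produce the term $h^s\|y\|_{H^s(\Omega)}$ at all. The paper's device is to subtract the \emph{discrete} harmonic extension $B_h u^h\in Y_{*h}$ from both $y^h$ and $y_h$; the remainders $y_0^h\in H^1_0(\Omega)$ and $y_{0h}\in Y_{0h}$ are then a standard Galerkin pair, and a direct energy estimate together with $\|\nabla B_h u^h\|_{L^2(\Omega)}\le c\|u^h\|_{H^{1/2}(\Gamma)}\le ch^{-1/2}\|u\|_{L^2(\Gamma)}$ gives
\[
\|\nabla e_h\|_{L^2(\Omega)}\le c\bigl(\|f\|_{H^{-1}(\Omega)}+h^{-1/2}\|u\|_{L^2(\Gamma)}\bigr).
\]
Multiplying by the duality factor $h^{s'+1/2}$ already yields $h^{s'+1/2}\|f\|_{H^{-1}(\Omega)}+h^{s'}\|u\|_{L^2(\Gamma)}$, so Berggren's term $h^s\|y\|_{H^s(\Omega)}$ is in fact superfluous and one obtains convergence order $s'$ rather than $s<s'$ --- which is exactly the improvement the paper records after the proof of Lemma~\ref{lem:discrerrorestconvex}.
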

Note that $s\to0$ for $\omega\to2\pi$.  

We will show in the next section that in the convex case the
convergence order is $\frac12$, without $\varepsilon$.

\subsection{Regularization approach}\label{sec:numreg}

We consider a regularization strategy such that $u^h\in Y_h^\partial$,
see Subsection \ref{sec:regularizationstrategy}. Recall that the
corresponding solution $y^h\in Y_*^h:=\{v\in H^1(\Omega):
v|_\Gamma=u^h\}$ is defined via \eqref{eq:regsol}. For a
regularization using the $L^2(\Gamma)$-projection or the Carstensen
interpolant we have the regularization error estimate
\begin{align}\label{reg_err_est}
  \|y-y^h\|_{L^2(\Omega)} &\le c\|u-u^h\|_{H^{-s}(\Gamma)} \le c
  h^s \|u\|_{L^2(\Gamma)}
\end{align}
with $s=\frac12$ if $\Omega$ is convex and
$s\in[0,\lambda-\frac12)$ if $\Omega$ is non-convex, see  Lemmas
  \ref{lem:regularizationerror} and \ref{lem:regularizationerrorforu}.\pagebreak[3]

The finite element solution $y_h$ is now searched in $Y_{*h}:=
Y_*^h\cap Y_h$ and is defined in the classical way,
\begin{align}\label{eq:serge20/06:6}
  y_h\in Y_{*h}:\quad (\nabla y_h,\nabla v_h)_\Omega =
  (f,v_h)_\Omega\quad\forall v_h\in Y_{0h}.
\end{align}
Note that, if we construct $u^h$ by the $L^2$-projection, we recover
the Berggren approach as a special case.

\begin{lemma}\label{lem:discrerrorestconvex}
  The finite element error estimate 
  \begin{align*}
    \|y^h-y_h\|_{L^2(\Omega)}\le ch^s   
    \left(h^{1/2}\|f\|_{H^{-1}(\Omega)}+\|u\|_{L^2(\Gamma)}\right)
  \end{align*}
  holds for $s=\frac12$ in the convex case and for $s<\lambda-\frac12$
  in the non-convex case.
\end{lemma}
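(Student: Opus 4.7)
The plan is to exploit two structural facts: $y^h-y_h$ lies in $H^1_0(\Omega)$ (since both functions have boundary trace $u^h$), and Galerkin orthogonality $(\nabla(y^h-y_h),\nabla v_h)_\Omega=0$ holds for every $v_h\in Y_{0h}$. Combining a standard Aubin--Nitsche duality argument with a Céa-type energy bound that uses a cheap discrete lift of $u^h$ then yields the claimed estimate.

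For the duality step, I take arbitrary $z\in L^2(\Omega)$ and let $w\in H^1_0(\Omega)$ solve $-\Delta w=z$. The elliptic regularity already exploited in the proof of Lemma \ref{lem:regularizationerror} gives $w\in H^{s+3/2}(\Omega)$ with $\|w\|_{H^{s+3/2}(\Omega)}\le c\|z\|_{L^2(\Omega)}$ for $s=\tfrac12$ in the convex case and $s<\lambda-\tfrac12$ in the non-convex case. A standard Lagrange interpolant $I_h w\in Y_{0h}$ then satisfies $\|\nabla(w-I_hw)\|_{L^2(\Omega)}\le c h^{s+1/2}\|z\|_{L^2(\Omega)}$, and Galerkin orthogonality yields
\[
  (y^h-y_h,z)_\Omega=(\nabla(y^h-y_h),\nabla w)_\Omega=(\nabla(y^h-y_h),\nabla(w-I_hw))_\Omega,
\]
so that $\|y^h-y_h\|_{L^2(\Omega)}\le c h^{s+1/2}\|\nabla(y^h-y_h)\|_{L^2(\Omega)}$. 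For the energy norm I pick $\tilde u^h\in Y_h$ equal to $u^h$ at boundary nodes and zero at all interior nodes; then $y^h-\tilde u^h\in H^1_0(\Omega)$ is approximated by $y_h-\tilde u^h\in Y_{0h}$ under the \emph{same} bilinear form and right-hand side, so Céa's lemma and stability of the Dirichlet problem give
\[
  \|\nabla(y^h-y_h)\|_{L^2(\Omega)}\le c\|\nabla(y^h-\tilde u^h)\|_{L^2(\Omega)}\le c\bigl(\|f\|_{H^{-1}(\Omega)}+\|\nabla\tilde u^h\|_{L^2(\Omega)}\bigr).
\]

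The hardest remaining ingredient is the discrete trace/inverse estimate
\[
  \|\nabla\tilde u^h\|_{L^2(\Omega)}\le c h^{-1/2}\|u^h\|_{L^2(\Gamma)},
\]
which is proved elementwise: on each element $T$ touching $\Gamma$, $\tilde u^h|_T\in\mathbb{P}_1(T)$ vanishes at the interior node, whence $\|\tilde u^h\|_{L^2(T)}\le c h^{1/2}\|u^h\|_{L^2(T\cap\Gamma)}$ by quadrature, and the standard inverse inequality $\|\nabla v\|_{L^2(T)}\le c h^{-1}\|v\|_{L^2(T)}$ for $v\in\mathbb{P}_1(T)$, combined with summation over all boundary-touching elements, yields the claim. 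Together with the stability bound $\|u^h\|_{L^2(\Gamma)}\le c\|u\|_{L^2(\Gamma)}$ from Lemma \ref{lem:regularizationerrorforu}, one arrives at
\[
  \|y^h-y_h\|_{L^2(\Omega)}\le c h^{s+1/2}\bigl(\|f\|_{H^{-1}(\Omega)}+h^{-1/2}\|u\|_{L^2(\Gamma)}\bigr)=c h^s\bigl(h^{1/2}\|f\|_{H^{-1}(\Omega)}+\|u\|_{L^2(\Gamma)}\bigr).
\]
The main obstacle is the $h^{-1/2}$ blow-up of the discrete lift: it is this loss that forces the convergence order down from the $h^{s+1/2}$ suggested by the dual regularity to the $h^s$ appearing in the statement, and it is unavoidable because only $L^2(\Gamma)$-control on the data is available. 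All other ingredients (Aubin--Nitsche, Céa's lemma, Lagrange interpolation) are routine.
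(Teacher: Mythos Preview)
Your argument is correct and follows the same overall strategy as the paper: Aubin--Nitsche duality to gain the factor $h^{s+1/2}$, combined with an energy estimate for $\nabla(y^h-y_h)$ obtained through a discrete lift of the boundary datum that costs a factor $h^{-1/2}$. The only real difference is the choice of lift. The paper uses the \emph{discrete harmonic extension} $B_hu^h$ and bounds $\|\nabla B_hu^h\|_{L^2(\Omega)}\le c\|u^h\|_{H^{1/2}(\Gamma)}\le ch^{-1/2}\|u^h\|_{L^2(\Gamma)}$ by quoting \cite{MayRannacherVexler2008} and an inverse inequality on $\Gamma$; you use the \emph{nodal zero-extension} $\tilde u^h$ and obtain the same $h^{-1/2}$ bound by a direct scaling argument. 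Your route is more self-contained, while the paper's avoids the elementwise computation by invoking a known estimate.

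One minor technical point in your sketch: not every element $T$ touching $\Gamma$ has exactly one interior vertex and an edge on $\Gamma$; some elements meet $\Gamma$ only at a single vertex, in which case $T\cap\Gamma$ has zero measure and your inequality $\|\tilde u^h\|_{L^2(T)}\le ch^{1/2}\|u^h\|_{L^2(T\cap\Gamma)}$ is vacuous there. The clean way to state the argument is vertex-wise rather than element-wise: write $\tilde u^h=\sum_{x\in\mathcal N_\Gamma}u^h(x)\lambda_x$, use $\|\nabla\lambda_x\|_{L^2(\Omega)}\le c$ and the norm equivalence $\|u^h\|_{L^2(\Gamma)}^2\sim h\sum_{x\in\mathcal N_\Gamma}|u^h(x)|^2$ on quasi-uniform meshes, together with the bounded overlap of the supports of the $\lambda_x$, to conclude $\|\nabla\tilde u^h\|_{L^2(\Omega)}^2\le c\sum_x|u^h(x)|^2\le ch^{-1}\|u^h\|_{L^2(\Gamma)}^2$. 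This is standard and does not affect the validity of your proof.
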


Before we prove this lemma we can immediately imply the following
final error estimate for this approach. By the triangle inequality we have
  \begin{align*}
    \|y-y_h\|_{L^2(\Omega)} &\le 
    \|y-y^h\|_{L^2(\Omega)} + \|y^h-y_h\|_{L^2(\Omega)}. 
  \end{align*}
  The first term is already treated in \eqref{reg_err_est}.
 The second term is treated in Lemma \ref{lem:discrerrorestconvex}. 

\begin{corollary}\label{lem:globalerrorestconvex}
  The discretization error estimate
  \begin{align*}
    \|y-y_h\|_{L^2(\Omega)}\le ch^s   
    \left(h^{1/2}\|f\|_{H^{-1}(\Omega)}+\|u\|_{L^2(\Gamma)}\right)
  \end{align*}
  holds for $s=\frac12$ in the convex case and
  $s\in[0,\lambda-\frac12)$ in the non-convex case. 
\end{corollary}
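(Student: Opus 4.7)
The plan is to obtain the corollary as an immediate consequence of the triangle inequality together with the two ingredients already assembled in the excerpt. Splitting
\[
  \|y-y_h\|_{L^2(\Omega)} \le \|y-y^h\|_{L^2(\Omega)} + \|y^h-y_h\|_{L^2(\Omega)}
\]
separates the error into a pure regularization error and a finite element error for the regularized problem, each of which I would bound in turn.

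For the first summand I would combine Lemma \ref{lem:regularizationerror} with Lemma \ref{lem:regularizationerrorforu}: the former yields $\|y-y^h\|_{L^2(\Omega)} \le c\|u-u^h\|_{H^{-s}(\Gamma)}$ and the latter gives $\|u-u^h\|_{H^{-s}(\Gamma)} \le c h^s \|u\|_{L^2(\Gamma)}$, both valid in precisely the range of $s$ stated in the corollary ($s=\tfrac12$ if $\Omega$ is convex, $s \in [0,\lambda-\tfrac12)$ otherwise; note that $\lambda-\tfrac12 \in (0,\tfrac12)$ in the non-convex case so the range $[0,1]$ required by Lemma \ref{lem:regularizationerrorforu} always contains the admissible $s$). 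The second summand is bounded directly by Lemma \ref{lem:discrerrorestconvex} on the same admissible range of $s$, producing the term $c h^s \left(h^{1/2}\|f\|_{H^{-1}(\Omega)} + \|u\|_{L^2(\Gamma)}\right)$.

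Summing the two contributions, the regularization bound $c h^s\|u\|_{L^2(\Gamma)}$ is absorbed into the like term already present in the second estimate, which produces the stated inequality. The only point requiring verification is that the ranges of admissible $s$ in the two prerequisite lemmas coincide, which they do by construction. No genuine obstacle remains at this level, the substantive analytical work having been concentrated in Lemma \ref{lem:discrerrorestconvex}; there a duality argument involving a test function $v_z \in V$ with $\Delta v_z = z$ and a sharp estimate of $\|\partial_n v_z\|_{H^s(\Gamma)}$ via the shift theorem $\|v_z\|_{H^{s+3/2}(\Omega)} \le c\|z\|_{L^2(\Omega)}$ (which limits $s$ to $s<\lambda-\tfrac12$) would need to be combined with a Galerkin-orthogonality step and standard finite element approximation for the regularized data $u^h \in Y_h^\partial$.
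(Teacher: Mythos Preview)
Your proposal is correct and matches the paper's proof essentially verbatim: the triangle inequality split $\|y-y_h\|_{L^2(\Omega)} \le \|y-y^h\|_{L^2(\Omega)} + \|y^h-y_h\|_{L^2(\Omega)}$, with the first term handled by Lemmas~\ref{lem:regularizationerror} and~\ref{lem:regularizationerrorforu} and the second by Lemma~\ref{lem:discrerrorestconvex}, is precisely what the paper does. Your closing remarks about the content of Lemma~\ref{lem:discrerrorestconvex} are accurate but unnecessary for the corollary itself, whose proof the paper also treats as immediate.
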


Note that the order can be improved if the boundary datum $u$ is more
regular, see Remark \ref{rem:5.3}.

\begin{proof}[Proof of Lemma \ref{lem:discrerrorestconvex}]
  Let $B_hu^h\in Y_{*h}$ be the discrete harmonic extension defined by
  \[
    (\nabla B_hu^h,\nabla v_h)_\Omega=0 \quad\forall v_h\in Y_{0h}
  \]
  which satisfies
  \begin{align}\label{eq:serge3}
    \|\nabla B_h u^h\|_{L^2(\Omega)}\le 
    c  \|u^h\|_{H^{1/2}(\Gamma)} \le
    c h^{-1/2} \|u^h\|_{L^2(\Gamma)} \le 
    c h^{-1/2} \|u\|_{L^2(\Gamma)}.
  \end{align}
  The first estimate can be cited from \cite[Lemma
  3.2]{MayRannacherVexler2008}, the second follows from an inverse
  inequality, the third from Lemma \ref{lem:regularizationerrorforu}.

  Now we notice that
  \[
    y^h=B_hu^h+y_0^h \quad \text{as well as}\quad y_h=B_hu^h+y_{0h},
  \]
  where $y_0^h\in H^1_0(\Omega)$ and $y_{0h} \in Y_{0h}$ satisfy 
  \begin{align}\label{eq:serge5}
    (\nabla y_0^h,\nabla v)_\Omega =
    (f,v)_\Omega-(\nabla (B_hu^h),\nabla v)_\Omega
    \quad\forall v\in H^1_0(\Omega), \\\label{eq:serge6}
    (\nabla y_{0h},\nabla v_h)_\Omega =
    (f,v_h)_\Omega-(\nabla (B_hu^h),\nabla
    v_h)_\Omega\quad\forall v_h\in Y_{0h}.
  \end{align}
  Hence $y_{0h}$ is the Galerkin approximation of $y_0^h$ and
  therefore (since $Y_{0h}\subset H^1_0(\Omega)$)
  \begin{align}\label{eq:sergeGalerkin}
    (\nabla (y_0^h-y_{0h}),\nabla v_h)_\Omega =0\quad\forall
    v_h\in Y_{0h}.
  \end{align}

  By taking $v=y_0^h$ in (\ref{eq:serge5}) (resp. $v_h=y_{0h}$ in
  (\ref{eq:serge6})), we see that
  \begin{eqnarray*}
    \|\nabla y_0^h\|^2_{L^2(\Omega)} \le  \|f\|_{H^{-1}(\Omega)}  \|y_0^h\|_{H^1(\Omega)}
    +\|\nabla (B_hu^h)\|_{L^2(\Omega)} \|\nabla y_0^h\|_{L^2(\Omega)},\\
    \|\nabla y_{0h}\|^2_{L^2(\Omega)} \le  \|f\|_{H^{-1}(\Omega)}  \|y_{0h}\|_{H^1(\Omega)}
    +\|\nabla (B_hu^h)\|_{L^2(\Omega)} \|\nabla y_{0h}\|_{L^2(\Omega)}.
  \end{eqnarray*}
  By the Poincar\'e inequality we obtain
  \begin{eqnarray*}
    \|\nabla y_0^h\|_{L^2(\Omega)} \le c
    \left( \|f\|_{H^{-1}(\Omega)}+\|\nabla (B_hu^h)\|_{L^2(\Omega)}\right),\\
    \|\nabla y_{0h}\|_{L^2(\Omega)} \le c\left(  \|f\|_{H^{-1}(\Omega)} 
      +\|\nabla (B_hu^h)\|_{L^2(\Omega)}\right).
  \end{eqnarray*}
  With the help of (\ref{eq:serge3}) we arrive at
  \begin{align}\label{eq:serge7}
    \|\nabla y_0^h\|_{L^2(\Omega)}+\|\nabla y_{0h}\|_{L^2(\Omega)} \le
    c\left( \|f\|_{H^{-1}(\Omega)}+h^{-1/2}\| u\|_{L^2(\Gamma)}\right).
  \end{align}

  Now as before we start with
  \[
    \|y_0^h-y_{0h}\|_{L^2(\Omega)}=\sup_{z\in L^2(\Omega), z\ne
    0}\frac{(y_0^h-y_{0h}, z)_\Omega}{\|z\|_{L^2(\Omega)}}.
  \]
  Letting again $v_z\in V$ be such that $\Delta v_z=z$, we get
  \[
    \|y_0^h-y_{0h}\|_{L^2(\Omega)}=\sup_{z\in L^2(\Omega), z\ne 0}
    \frac{(\nabla(y_0^h-y_{0h}), \nabla v_z)_\Omega}{\|z\|_{L^2(\Omega)}},
  \]
  and therefore thanks to (\ref{eq:sergeGalerkin}) we arrive at
  \[
  \|y_0^h-y_{0h}\|_{L^2(\Omega)}=\sup_{z\in L^2(\Omega), z\ne 0}
   \frac{(\nabla(y_0^h-y_{0h}), \nabla (v_z-I_h v_z))_\Omega}%
   {\|z\|_{L^2(\Omega)}},
  \]
  where $I_h$ is the standard Lagrange interpolation operator. By the
  Cauchy-Schwarz inequality, the well-known estimate
  \[
    \|v_z-I_h v_z\|_{1,\Omega}\le c h^{1/2+s} \|v_z\|_{H^{3/2+s}(\Omega)}
    \le c h^{1/2+s} \|z\|_{L^2(\Omega)},
  \]
  with $s=\frac12$ in the convex case and $s<\lambda-\frac12$ in the non-convex
  case, and the estimate (\ref{eq:serge2}) we obtain
  \[
    \|y_0^h-y_{0h}\|_{L^2(\Omega)}\le c h^{1/2+s}
    \|\nabla(y_0^h-y_{0h})\|_{L^2(\Omega)}.
  \]

  Using the a priori estimate (\ref{eq:serge7}), we conclude that
  \[
    \|y_0^h-y_{0h}\|_{L^2(\Omega)}\le ch^s 
    \left(h^{1/2}\|f\|_{H^{-1}(\Omega)}+\| u\|_{L^2(\Gamma)}\right).
  \]
\end{proof}
Note that we have now proved the convergence order $\frac12$ for
Berggren's approach, in the convex case.

\pagebreak[3]

\section{\label{sect:test}Numerical test}

This section is devoted to the numerical verification of our
theoretical results. For that purpose we present an example with known
solution. Furthermore, to examine the influence of the corner
singularities, we consider different polygonal domains $\Omega_\omega$
depending on an interior angle $\omega\in(0,2\pi)$. These domains are defined by
\begin{equation*}
  \Omega_\omega:=(-1,1)^2\cap
  \{x\in \R^2: (r(x),\varphi(x))\in(0,\sqrt{2}]\times[0,\omega]\},
\end{equation*}
where $r$ and $\varphi$ stand for the polar coordinates located at the
origin. The boundary of $\Omega_\omega$ is denoted by $\Gamma_\omega$
which is decomposed into straight line segments $\Gamma_j$,
$j=1,\ldots,m(\omega)$, counting counterclockwise beginning at the
origin.
As numerical example we consider the problem
\begin{equation*}
  \begin{aligned}
    -\Delta y   &= 0 && \text{in }\Omega_\omega,\\
    y &= u && \text{on }\Gamma_j, \quad j=1,\dots,m(\omega).
  \end{aligned}
\end{equation*}
The boundary datum $u$ is chosen as
\[
  u:=r^{-0.4999}\sin(-0.4999\varphi)\quad \text{on } \Gamma_\omega.
\]
This function belongs to $L^p(\Gamma)$ for every $p<2.0004$. The exact
solution of our problem is simply
\[
  y=r^{-0.4999}\sin(-0.4999\varphi),
\]
since $y$ is harmonic. A plot of it can be seen in Figure \ref{fig:1}.
\begin{figure}\centering
  \includegraphics[width=0.5\textwidth]{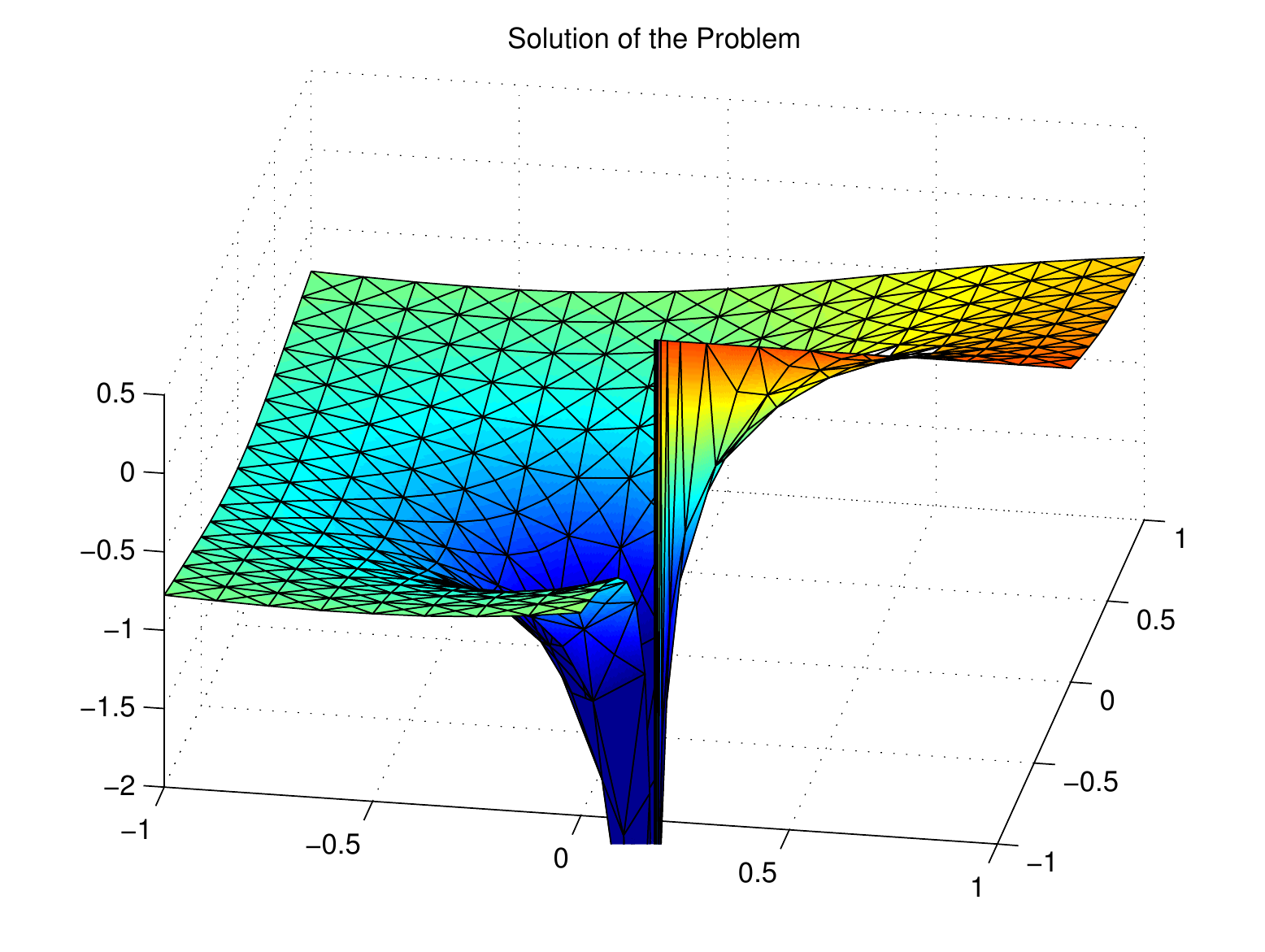}
  \caption{\label{fig:1}Visualization of the exact solution}
\end{figure}
We solve the problem numerically by using a finite element method with piecewise linear
finite elements combined with either the $L^2$-projection or the Carstensen interpolant
of the data on the boundary.
The finite element meshes for the calculations are generated by using
a newest vertex bisection algorithm as described in
\cite{pfefferer:14}. The discretization errors for different mesh
sizes and the experimental orders of convergence are given in Tables
\ref{tab:1}--\ref{tab:3} below for interior angles
$\omega\in\{3\pi/4,3\pi/2,355\pi/180\}$, where the discrete solutions based on the $L^2$-projection and the Carstensen
interpolant are denoted by $y_{h,2}$ or $y_{h,C}$, respectively. We note that the errors are calculated by an adaptive quadrature formula. Apparently, the results are very much in
congruence with the predicted orders.
\begin{table}[p]\centering
    \begin{tabular}{crcccc}
      \toprule
      mesh size $h$  & $\#$ unknowns & $\|y-y_{h,2}\|_{L^2(\Omega_\omega)}$ & eoc     & $\|y-y_{h,C}\|_{L^2(\Omega_\omega)}$ & eoc    \\
      \midrule
      0.50000        & 19            & 0.26142                          &         & 0.26794                          &        \\
      0.25000        & 61            & 0.18577                          & 0.49289 & 0.18973                          & 0.49794\\
      0.12500        & 217           & 0.13172                          & 0.49600 & 0.13426                          & 0.49899\\
      0.06250        & 817           & 0.09331                          & 0.49745 & 0.09497                          & 0.49940\\
      0.03125        & 3169          & 0.06605                          & 0.49838 & 0.06717                          & 0.49965\\
      0.01562        & 12481         & 0.04674                          & 0.49902 & 0.04750                          & 0.49982\\
      0.00781        & 49537         & 0.03306                          & 0.49942 & 0.03359                          & 0.49992\\
      0.00390        & 197377        & 0.02338                          & 0.49967 & 0.02375                          & 0.49998\\
      \midrule
      expected &&&0.5&&0.5 \\
      \bottomrule
    \end{tabular}
\caption{\label{tab:1}Discretization errors and experimental orders of convergence (eoc) for $\omega=3\pi/4$.
Expected convergence rate: $1/2$}
\end{table}%
\begin{table}[p]\centering
    \begin{tabular}{crcccc}
      \toprule
      mesh size $h$  & $\#$ unknowns & $\|y-y_{h,2}\|_{L^2(\Omega_\omega)}$ & eoc     & $\|y-y_{h,C}\|_{L^2(\Omega_\omega)}$ & eoc     \\
      \midrule                                                                    
      0.50000        & 33            & 0.73622                          &         & 0.77007                          &         \\
      0.25000        & 113           & 0.64484                          & 0.19118 & 0.67086                          & 0.19897 \\
      0.12500        & 417           & 0.56841                          & 0.18201 & 0.58915                          & 0.18737 \\
      0.06250        & 1601          & 0.50328                          & 0.17555 & 0.52022                          & 0.17950 \\
      0.03125        & 6273          & 0.44674                          & 0.17194 & 0.46091                          & 0.17464 \\
      0.01562        & 24833         & 0.39711                          & 0.16987 & 0.40920                          & 0.17166 \\
      0.00781        & 98817         & 0.35330                          & 0.16865 & 0.36376                          & 0.16982 \\
      0.00390        & 394241        & 0.31448                          & 0.16793 & 0.32362                          & 0.16868 \\
      \midrule
      expected &&&0.16667&&0.16667 \\
      \bottomrule
    \end{tabular}
\caption{\label{tab:2}Discretization errors and experimental orders of convergence (eoc) for $\omega=3\pi/2$.
Expected convergence rate: $1/6$}
\end{table}%
\begin{table}[p]\centering
    \begin{tabular}{crcccc}
      \toprule
      mesh size $h$  & $\#$ unknowns & $\|y-y_{h,2}\|_{L^2(\Omega_\omega)}$ & eoc     & $\|y-y_{h,C}\|_{L^2(\Omega_\omega)}$ & eoc    \\
      \midrule
      0.50000        & 46            & 1.1049                           &         & 1.1141                           &        \\
      0.25000        & 159           & 1.0693                           & 0.04721 & 1.0732                           & 0.05406\\
      0.12500        & 589           & 1.0491                           & 0.02749 & 1.0513                           & 0.02967\\
      0.06250        & 2265          & 1.0367                           & 0.01715 & 1.0384                           & 0.01782\\
      0.03125        & 8881          & 1.0281                           & 0.01207 & 1.0296                           & 0.01226\\
      0.01562        & 35169         & 1.0213                           & 0.00956 & 1.0228                           & 0.00962\\
      0.00781        & 139969        & 1.0154                           & 0.00832 & 1.0169                           & 0.00834\\
      0.00390        & 558465        & 1.0100                           & 0.00771 & 1.0114                           & 0.00772\\
      \midrule
      expected &&&0.00704&&0.00704 \\
      \bottomrule
    \end{tabular}
\caption{\label{tab:3}Discretization errors and experimental orders of convergence (eoc) for $\omega=355\pi/180$,
Expected convergence rate: $180/355-1/2=1/142\approx 0.007$}
\end{table}%
The different numerical solutions $y_{h,2}$ and $y_{h,C}$ for $\omega=3\pi/2$ and $h=1/8$ are displayed in
Figure \ref{fig:2a} and Figure \ref{fig:2b}, respectively.
\begin{figure}\centering
	\begin{subfigure}[c]{0.49\textwidth}
		\includegraphics[width=\textwidth]{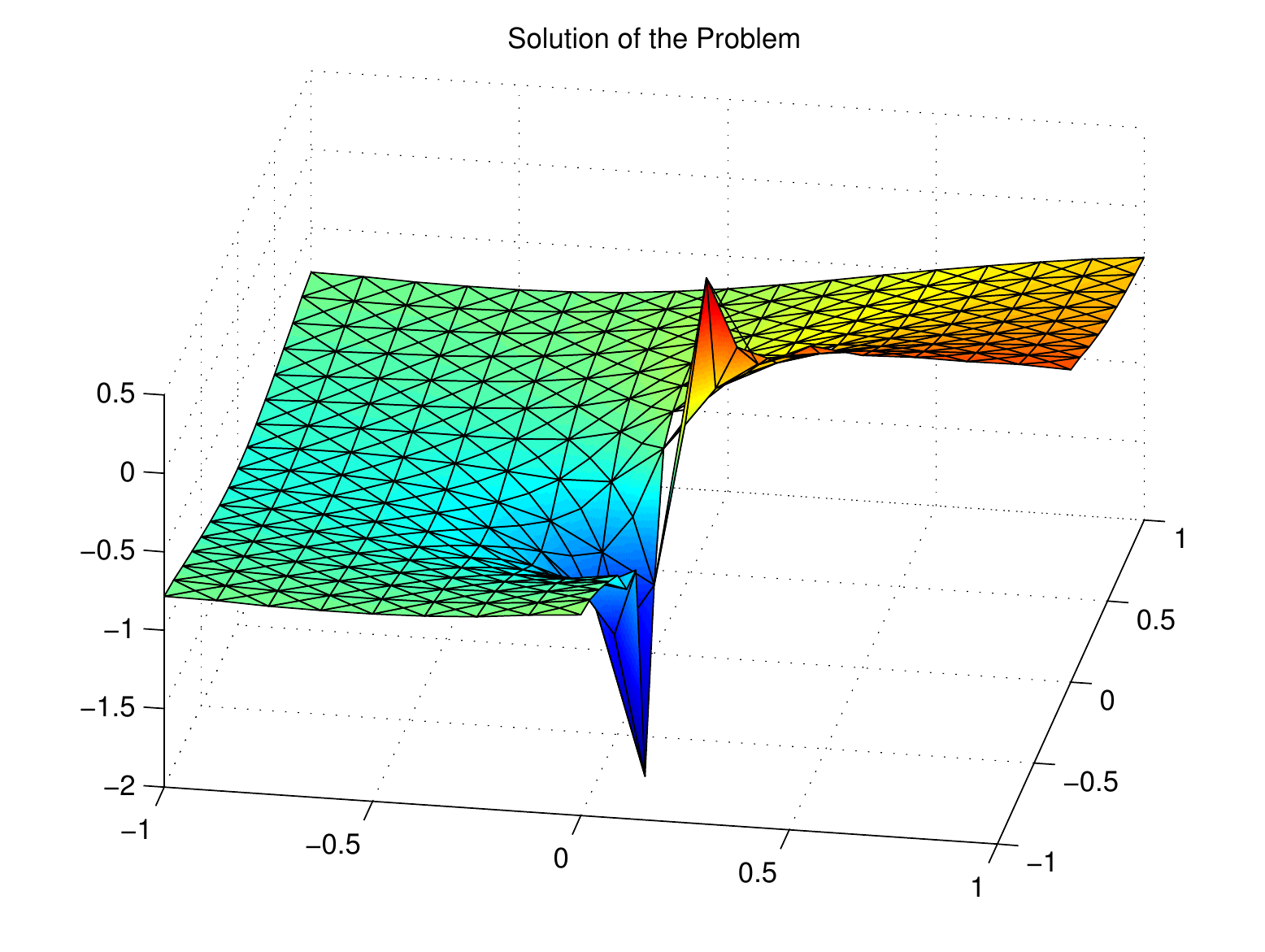}
		\caption{\label{fig:2a}Using $L^2$-projection: $u^h=\Pi_h u$}
  \end{subfigure}\hfill
  \begin{subfigure}[c]{0.49\textwidth}
		\includegraphics[width=\textwidth]{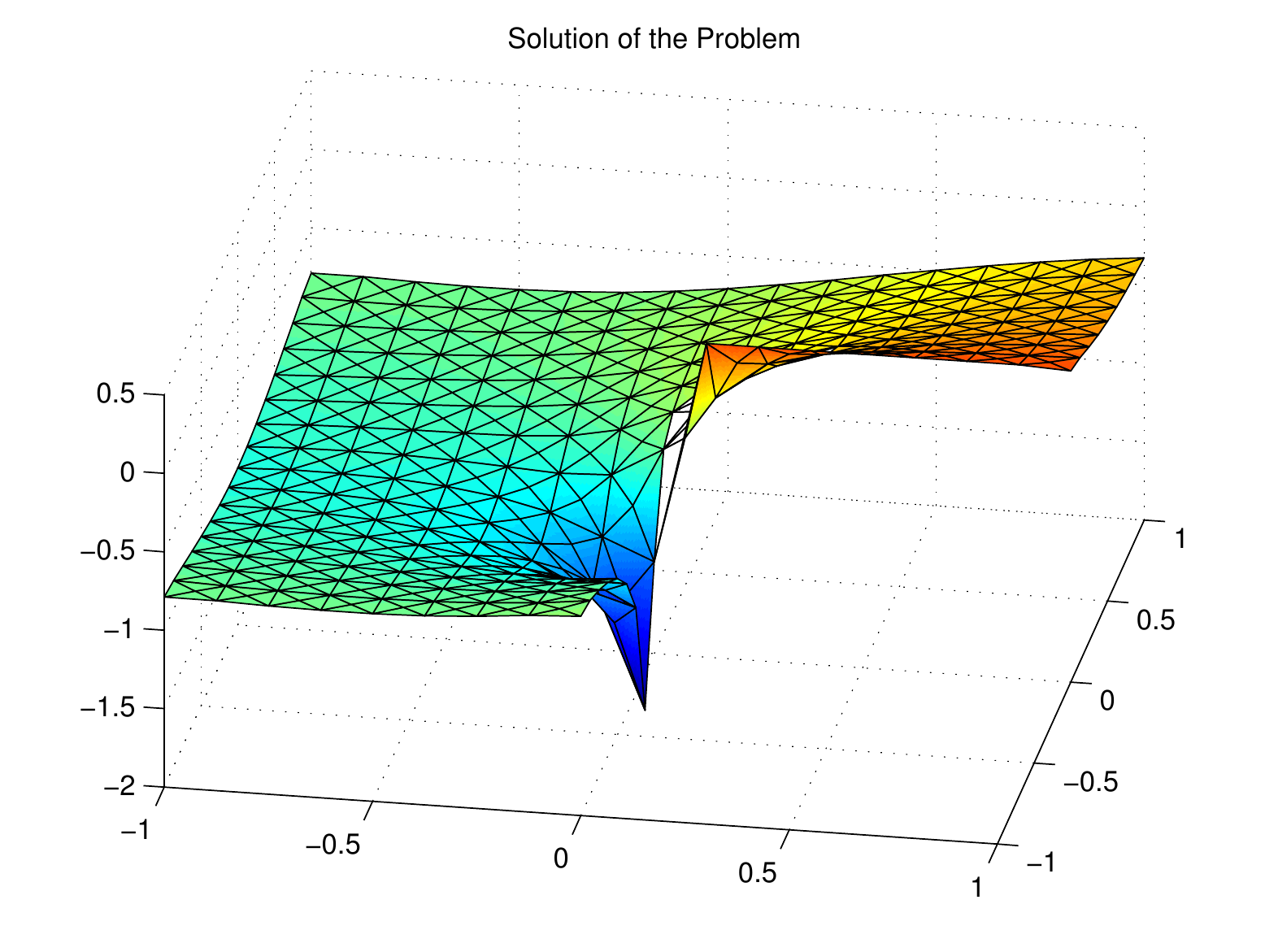}
		\caption{\label{fig:2b}Using Carstensen interpolant: $u^h=C_h u$}
  \end{subfigure}
  \caption{Visualization of the
    approximate solutions with $h=1/8$}
\end{figure}
We see that the infinite boundary
value in the origin is replaced by a finite one. If using the $L^2$-projection of the data, we see also that the
zero boundary values at the edge with $\theta=0$ are replaced by an
oscillating function which is typical for this kind of regularization.
By using the Carstensen interpolant as regularization of the data, this can be avoided according to the local definition of this interpolation operator. But we note
that these oscillations are a feature of the former regularization approach and do not disturb
the approximation order.

\section{Extensions}

\begin{remark}
  The paper is written for two-dimensional domains. However most
  results also hold in the three-dimensional case or can simply be extended to this one.
  The most crucial issue is the regularity which we have used for the corresponding adjoint problem since in three dimensional
  domains not only corner singularities but also edge singularities need to be taken into account.
\end{remark}

\begin{remark}\label{rem:5.2}
		We assume $f\in H^{-1}(\Omega)$ for the discretization error estimates of
		Corollary \ref{lem:globalerrorestconvex}. This is only for simplicity.
		For defining the very weak solution and the numerical solution we only need $f\in
    (H^1_0(\Omega)\cap H^1_\Delta(\Omega))'\cap Y_{0h}'$
    such that we could admit right hand sides from $L^1(\Omega)$ and Dirac
    measures as well. However, in this case the discretization error analysis
    demands an adapted proof, which exceeds the
    scope of this paper.
\end{remark}

\begin{remark} \label{rem:5.3} 
  We assume $u\in L^2(\Gamma)$ for simplicity. The case $u\in
  H^t(\Gamma)$, $t>0$, 
  is also of interest in the analysis of Dirichlet control problems,
  see \cite{ApelMateosPfeffererRoesch2013}.  The results can be
  improved in this more regular case: With $s$ from Corollary
  \ref{lem:globalerrorestconvex} we have
   \begin{align*}
     \|y-y^h\|_{L^2(\Omega)} \le 
     c\|u-u^h\|_{H^{-s}(\Gamma)} \le 
     ch^{s+t}\|u\|_{H^t(\Gamma)}, \quad t\in[0,\tfrac12].
   \end{align*}
   The first step is again the application of Lemma
   \ref{lem:regularizationerror}, while the second can be proved in
   analogy to Lemma \ref{lem:regularizationerrorforu}. The necessary
   prerequisites are already provided in Lemma \ref{lem:carstensen}
   and Remark \ref{rem:errestL2proj}. Furthermore, when we check the
   proof of Lemma \ref{lem:discrerrorestconvex} we find that we obtain
   \[
     \|y_0^h-y_{0h}\|_{L^2(\Omega)}\le ch^s 
     \left(h^{1/2}\|f\|_{H^{-1}(\Omega)}+
     h^t\| u^h\|_{H^t(\Gamma)}\right), \quad t\in[0,\tfrac12].
   \]
   Hence it remains to prove 
   \begin{align}\label{Htstability}
     \| u^h\|_{H^t(\Gamma)}\le c\| u\|_{H^t(\Gamma)}, 
   \end{align}
   in order to conclude 
   \[
     \|y-y_h\|_{L^2(\Omega)}\le ch^{s+t}
     \left(h^{1/2-t}\|f\|_{H^{-1}(\Omega)}+\|u\|_{H^t(\Gamma)}\right), 
     \quad t\in[0,\tfrac12].
   \]

   The estimate \eqref{Htstability} is known for domains and can be
   proved for $t\in [0,1]$ also for the boundary $\Gamma$ by using the
   inverse inequality and the approximation properties of $u^h$ and
   the Ritz projection. To this end recall that $\Gamma_j$,
   $j=1,\ldots,N$, are the boundary segments of $\Gamma$, and let
   $P_h^j$ be the Ritz projection on $Y_h^\partial|_{\Gamma_j}$. Then
   we have
   \begin{align*}
     \|u^h\|_{H^1(\Gamma)} &\le \sum_{j=1}^N\|u^h-P_h^ju\|_{H^1(\Gamma_j)} +
     \sum_{j=1}^N\|P_h^ju\|_{H^1(\Gamma_j)} \\ &\le
     ch^{-1} \sum_{j=1}^N\|u^h-P_h^ju\|_{L^2(\Gamma_j)} +
     \sum_{j=1}^N\|u\|_{H^1(\Gamma_j)} \\ &\le
     ch^{-1} \|u-u^h\|_{L^2(\Gamma)} + 
     ch^{-1} \sum_{j=1}^N \|u-P_h^ju\|_{L^2(\Gamma_j)} +
     \|u\|_{H^1(\Gamma)} \\ &\le c\|u\|_{H^1(\Gamma)},
   \end{align*}
   i.~e., we have \eqref{Htstability} for $t=1$.  Since we proved
   \eqref{Htstability} for $t=0$ in Lemma
   \ref{lem:regularizationerrorforu} we get the desired result by
   interpolation in Sobolev spaces. 
\end{remark}

\addtocounter{section}{1}
\renewcommand{\thesection}{A}
\setcounter{theorem}{0}
\section*{Appendix: Error estimates for the Carstensen interpolant}

Recall from Subsection \ref{sec:regularizationstrategy} that the
piecewise linear Carstensen interpolant is defined via
\[
  C_hu=\sum_{x\in {\mathcal N}_{\Gamma}} \pi_x(u)\lambda_x,\quad
  \pi_x(u)=\frac{\int_{\Gamma} u\lambda_x} {\int_{\Gamma} \lambda_x}
  =\frac{(u,\lambda_x)_\Gamma}{(1,\lambda_x)_\Gamma}.
\]
where $\lambda_x$ is the standard hat function related to $x$.

\begin{lemma}\label{lem:ChIntErrEst}
  The piecewise linear Carstensen interpolant satisfies the error estimate
  \begin{align}\label{ChIntErrEst}
    \|\varphi-C_h\varphi\|_{L^2(\Gamma)}\leq ch^s \|\varphi\|_{H^s(\Gamma)},
  \end{align}
  for $\varphi\in H^s(\Gamma)$, $s\in[0,1]$.
\end{lemma}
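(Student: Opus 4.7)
The plan is to establish the endpoint cases $s=0$ (stability) and $s=1$ (first order approximation), and then obtain intermediate $s\in(0,1)$ by real interpolation applied to the linear operator $I-C_h$. Since $C_h$ is linear and $C_h1=1$ (because $\pi_x(1)=1$ and $\sum_{x\in\mathcal N_\Gamma}\lambda_x\equiv 1$ on $\Gamma$), the operator $I-C_h$ annihilates constants on each boundary segment, which is exactly what we need for a Bramble--Hilbert argument.

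First I would prove the $L^2$-stability $\|C_h\varphi\|_{L^2(\Gamma)}\le c\|\varphi\|_{L^2(\Gamma)}$. Using the Cauchy--Schwarz inequality on the averaging functional $\pi_x$ gives
\[
|\pi_x(\varphi)|^2 \le \frac{(\varphi^2,\lambda_x)_\Gamma}{(1,\lambda_x)_\Gamma}.
\]
On each boundary edge $E$ with its two nodes $x_1,x_2$, one has $\int_E |C_h\varphi|^2 \le c\,h\,(|\pi_{x_1}(\varphi)|^2+|\pi_{x_2}(\varphi)|^2)$ thanks to $|\lambda_{x_i}|\le 1$ and $|E|\sim h$, while for quasi-uniform meshes $h\cdot|\pi_x(\varphi)|^2\le c\|\varphi\|_{L^2(\mathrm{supp}\,\lambda_x\cap\Gamma)}^2$. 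Summing over edges and using the finite overlap of the patches yields the stability estimate, and hence $\|\varphi-C_h\varphi\|_{L^2(\Gamma)}\le c\|\varphi\|_{L^2(\Gamma)}$, which is the $s=0$ case of \eqref{ChIntErrEst}.

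Next I would prove the $s=1$ estimate $\|\varphi-C_h\varphi\|_{L^2(\Gamma)}\le ch\|\varphi\|_{H^1(\Gamma)}$. Because $C_h$ reproduces constants, on each edge $E$ the error $\varphi-C_h\varphi$ can be written as $(\varphi-\bar\varphi_E)-C_h(\varphi-\bar\varphi_E)$ for any constant $\bar\varphi_E$ (where we use $\sum\lambda_x=1$ on $\Gamma$). Choosing $\bar\varphi_E$ as the mean value of $\varphi$ on a patch $\omega_E$ containing $E$ and its neighbouring edges, combining the $L^2$-stability just established with the Poincar\'e inequality on $\omega_E$ gives
\[
\|\varphi-C_h\varphi\|_{L^2(E)}\le c\|\varphi-\bar\varphi_E\|_{L^2(\omega_E)}\le ch\,|\varphi|_{H^1(\omega_E)},
\]
and summation with finite overlap yields the claim.

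Finally, by real interpolation (the $K$-method) applied to the bounded linear operator $I-C_h:L^2(\Gamma)\to L^2(\Gamma)$ with operator norm $\le c$ and $I-C_h:H^1(\Gamma)\to L^2(\Gamma)$ with norm $\le ch$, one obtains $\|I-C_h\|_{H^s(\Gamma)\to L^2(\Gamma)}\le ch^s$ for $s\in[0,1]$, which is \eqref{ChIntErrEst}. The main obstacle I anticipate is the interpolation step on the piecewise smooth $\Gamma$, where $H^s(\Gamma)$ is naturally defined segment by segment; one must check that the interpolation space $(L^2(\Gamma),H^1(\Gamma))_{s,2}$ does coincide with the product $\prod_j H^s(\Gamma_j)$ used throughout the paper (or, alternatively, argue segmentwise and glue using the finite-overlap structure near the corners so the constants remain $h$-independent). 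A related subtlety is handling corner nodes, where $\lambda_x$ has support on two adjacent segments; here the Poincar\'e argument in the $s=1$ step has to use a patch $\omega_E$ that crosses the corner, but quasi-uniformity and the finite number of corners make this harmless.
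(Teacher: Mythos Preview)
Your proposal is correct and follows essentially the same three-step structure as the paper: stability for $s=0$, a Bramble--Hilbert/Deny--Lions argument for $s=1$, and Sobolev interpolation for $s\in(0,1)$. The only cosmetic difference is that for $s=1$ the paper uses the nodal decomposition $(u-C_hu)|_e=\sum_i(u-\pi_{x_i}(u))\lambda_{x_i}$ and exploits that each scalar functional $\pi_{x}$ preserves constants, whereas you subtract a patch mean $\bar\varphi_E$ and invoke the local stability of $C_h$ itself; both routes lead to the same estimate.
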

\begin{proof}
  The interpolation error estimate
  \eqref{ChIntErrEst} is in principle contained in
  \cite{Carstensen1999}, however, it is there an estimate on a domain
  such that we sketch the proof here. 

  For $s=0$ this estimate follows from the stability property
  \begin{align}\label{stabCarst}
    \|\pi_x(u)\|_{L^2(\omega_x)}\le c \|u\|_{L^2(\omega_x)},
  \end{align}
  where $\omega_x$ is the support of $\lambda_x$ on $\Gamma$.

  For $s=1$ we use that $\pi_x(w)=w$ for all constants $w$ such that
  \[ 
    \|u-\pi_x(u)\|_{L^2(\omega_x)} = \|(u-w)-\pi_x(u-w)\|_{L^2(\omega_x)} \le
    c\|u-w\|_{L^2(\omega_x)} \le ch \|u\|_{H^1(\omega_x)}
  \]
  via the Deny--Lions lemma, see also \cite[Lemma
  4.3]{ReyesMeyerVexler2008} where the piecewise affine and Lipschitz
  continuous transformation of $\omega_x$ to some reference domain is
  discussed in detail. For a boundary edge with end points $x_1$ and
  $x_2$ we have
  \[
    \|u-C_hu\|_{L^2(e)} = 
    \|(u-\pi_{x_1}(u))\lambda_{x_1} + (u-\pi_{x_2}(u))\lambda_{x_2}\|_{L^2(e)} \le c
    \sum_{i=1}^2 \|u-\pi_{x_i}(u)\|_{L^2(e)}.
  \]
  From these two estimates we obtain \eqref{ChIntErrEst} in the case
  $s=1$. In the remaining case $s\in(0,1)$ the error
  estimate \eqref{ChIntErrEst} follows by interpolation of Sobolev
  spaces.
\end{proof}

\begin{lemma}\label{lem:carstensen}
  The piecewise linear Carstensen interpolant satisfies the error estimate 
  \[
    \|u-C_hu\|_{H^{-s}(\Gamma)}\le ch^{s+t}\|u\|_{H^t(\Gamma)},
    \quad s\in[0,1],\quad t\in[0,1],
  \]
  as well as the stability estimate
  \[
    \|C_hu\|_{L^2(\Gamma)}\le c\|u\|_{L^2(\Gamma)}.
  \]
\end{lemma}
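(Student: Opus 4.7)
The plan is to exploit two structural features of the Carstensen interpolant. First, because the formula $\pi_x(u) = (u,\lambda_x)_\Gamma/(1,\lambda_x)_\Gamma$ is symmetric in its two slots, the operator $C_h$ is self-adjoint with respect to the $L^2(\Gamma)$-inner product:
\[
  (C_h u, v)_\Gamma = \sum_{x\in\mathcal{N}_\Gamma} \frac{(u,\lambda_x)_\Gamma (v,\lambda_x)_\Gamma}{(1,\lambda_x)_\Gamma} = (u, C_h v)_\Gamma.
\]
Second, each local average is exact on constants, and by construction $\int_{\omega_x}\lambda_x(\varphi-\pi_x(\varphi))=0$, which provides a local cancellation tool.

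I would first establish stability $\|C_h u\|_{L^2(\Gamma)}\le c\|u\|_{L^2(\Gamma)}$ by squaring $|C_hu|^2\le c\sum_{x}\pi_x(u)^2$ on each boundary edge, invoking the local stability \eqref{stabCarst} (which gives $|\pi_x(u)|\le c|\omega_x|^{-1/2}\|u\|_{L^2(\omega_x)}$), and using that on the one-dimensional $\Gamma$ the supports $\omega_x$ have uniformly bounded overlap and $\{\lambda_x\}$ is a partition of unity with $0\le\lambda_x\le 1$; summation then yields the global bound.

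For the interpolation-error estimate, duality combined with self-adjointness gives
\[
  \|u-C_hu\|_{H^{-s}(\Gamma)}=\sup_{\varphi\in H^s(\Gamma),\,\varphi\neq 0}\frac{(u,\varphi-C_h\varphi)_\Gamma}{\|\varphi\|_{H^s(\Gamma)}}.
\]
Writing $\varphi-C_h\varphi=\sum_x(\varphi-\pi_x(\varphi))\lambda_x$, I would exploit the vanishing moment $\int_{\omega_x}(\varphi-\pi_x(\varphi))\lambda_x=0$ to subtract an arbitrary constant $c_x$ from $u$ on each patch, obtaining
\[
  (u,\varphi-C_h\varphi)_\Gamma=\sum_x\int_{\omega_x}(u-c_x)(\varphi-\pi_x(\varphi))\lambda_x.
\]
Choosing $c_x$ as a Deny--Lions best constant approximation of $u$ on $\omega_x$ (taking $c_x=0$ when $t=0$) yields $\|u-c_x\|_{L^2(\omega_x)}\le ch^t\|u\|_{H^t(\omega_x)}$, while the local version of Lemma~\ref{lem:ChIntErrEst} provides $\|\varphi-\pi_x(\varphi)\|_{L^2(\omega_x)}\le ch^s\|\varphi\|_{H^s(\omega_x)}$. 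Applying Cauchy--Schwarz locally, using $0\le\lambda_x\le 1$, and summing over $x$ via the finite-overlap property and a discrete Cauchy--Schwarz inequality gives
\[
  |(u,\varphi-C_h\varphi)_\Gamma|\le ch^{s+t}\|u\|_{H^t(\Gamma)}\|\varphi\|_{H^s(\Gamma)},
\]
and taking the supremum over $\varphi$ yields the desired estimate. The extremal cases $t\in\{0,1\}$ come out directly, and the intermediate $t\in(0,1)$ follows either from the same local argument or, alternatively, by operator interpolation in Sobolev spaces from the two corner estimates.

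The main obstacle I anticipate is the clean verification of the local approximation inequality $\|\varphi-\pi_x(\varphi)\|_{L^2(\omega_x)}\le ch^s\|\varphi\|_{H^s(\omega_x)}$ at fractional orders $s\in(0,1)$; this requires transporting $\omega_x$ to a reference patch on a one-dimensional boundary piece (possibly straddling a corner of $\Omega$, which one handles by separately treating each adjacent edge and summing), combining the Deny--Lions argument at $s=1$ with the stability at $s=0$, and invoking interpolation of Sobolev spaces. Once these local ingredients are secured, the global result follows by the summation outlined above.
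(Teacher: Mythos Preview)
Your argument is correct and coincides with the paper's proof in all essential respects: both use the partition of unity $\sum_x\lambda_x=1$, the local orthogonality $(\,\cdot-\pi_x(\cdot),\lambda_x)_\Gamma=0$, a Deny--Lions estimate on each patch, and Cauchy--Schwarz with finite overlap. The only cosmetic difference is that you first invoke self-adjointness of $C_h$ to write $(u-C_hu,\varphi)_\Gamma=(u,\varphi-C_h\varphi)_\Gamma$ and then subtract an arbitrary best constant $c_x$ from $u$ using the vanishing moment of $(\varphi-\pi_x(\varphi))\lambda_x$, whereas the paper expands $u-C_hu$ directly and subtracts the specific constant $\pi_x(\varphi)$ from $\varphi$ using the orthogonality of $(u-\pi_x(u))$ to $\lambda_x$; these are the same computation viewed from the two sides of the symmetric identity you observed.
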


\begin{proof}
  The second estimate follows directly from the fact that $0\leq
  \lambda_x\leq 1$ and the stability property \eqref{stabCarst}.

  The first estimate is in principle contained in
  \cite{ReyesMeyerVexler2008}, however, it is there an estimate on a
  domain such that we sketch the proof here.  First we notice that the
  definition of $\pi_x(u)$ is equivalent to
  $(u-\pi_x(u),\lambda_x)_\Gamma=0$ and hence we have
  \[
    (u-\pi_x(u),\pi_x(\varphi)\lambda_x)_\Gamma=0 \quad\forall\varphi\in H^s(\Gamma).
  \]
  With this identity and with $\sum_x\lambda_x=1$ we get
  \begin{align*}
    (u-C_hu, \varphi)_\Gamma &=
    (u\sum_x\lambda_x-\sum_x \pi_x(u)\lambda_x, \varphi)_\Gamma =
    \sum_x (u-\pi_x(u), \varphi\lambda_x)_\Gamma \\ &=
    \sum_x (u-\pi_x(u), (\varphi-\pi_x(\varphi))\lambda_x)_\Gamma \\ &\le
    \sum_x \|u-\pi_x(u)\|_{L^2(\omega_x)} \|\varphi-\pi_x(\varphi)\|_{L^2(\omega_x)}
  \end{align*}
  where $\omega_x$ is again the support of $\lambda_x$ on $\Gamma$.
  With similar arguments as in the proof of Lemma \ref{lem:ChIntErrEst} we conclude
  \[
    (u-C_hu, \varphi)_\Gamma\le ch^{s+t}\|u\|_{H^t(\Gamma)}\|\varphi\|_{H^s(\Gamma)}.
  \]
  By the definition of the negative norm,
  \[
    \|u-C_hu\|_{H^{-s}(\Gamma)}=
    \sup_{\varphi\in H^{s}(\Gamma), \varphi\ne 0}
    \frac{(u-C_hu, \varphi)_\Gamma}{\|\varphi\|_{H^{s}(\Gamma)}},
  \]
  we obtain the assertion of the lemma.  
\end{proof}

\begin{remark}\label{rem:errestL2proj}
  Note that this error estimate holds also for the $L^2$-projection,
  \[
    \|u-\Pi_hu\|_{H^{-s}(\Gamma)}\le ch^{s+t}\|u\|_{H^t(\Gamma)},
    \quad s\in[0,1],\quad t\in[0,1].
  \]
  It can be proved  similarly by using
  $(u-\Pi_hu, \varphi)_\Gamma=(u-\Pi_hu, \varphi-\Pi_h\varphi)_\Gamma$.
\end{remark}

\bigskip
\begin{acknowledgement}
  The authors thank Markus Melenk and Christian Simader for helpful
  discussions.  
\end{acknowledgement}

\bibliographystyle{abbrv}\bibliography{ApPf}

\begin{thebibliography}{10}

\bibitem{ApelMateosPfeffererRoesch2013}
T.~Apel, M.~Mateos, J.~Pfefferer, and A.~R\"osch.
\newblock On the regularity of the solutions of {D}irichlet optimal control
  problems in polygonal domains.
\newblock Preprint arXiv:1505.00413, 2015.

\bibitem{ApelNicaisePfefferer2014b}
T.~Apel, S.~Nicaise, and J.~Pfefferer.
\newblock A dual singular complement method for the numerical solution of the
  {P}oisson equation with {$L^2$} boundary data in non-convex domains.
\newblock Preprint arXiv:1505.00414, 2015.

\bibitem{Babuska1970}
I.~Babu{\v{s}}ka.
\newblock Error-bounds for finite element method.
\newblock {\em Numerische Mathematik}, 16:322--333, 1970/1971.

\bibitem{Berggren2004}
M.~Berggren.
\newblock {Approximations of very weak solutions to boundary-value problems.}
\newblock {\em SIAM J. Numer. Anal.}, 42(2):860--877, 2004.

\bibitem{Carstensen1999}
C.~Carstensen.
\newblock {Quasi-interpolation and a posteriori error analysis in finite
  element methods.}
\newblock {\em M2AN, Math. Model. Numer. Anal.}, 33(6):1187--1202, 1999.

\bibitem{CasasRaymond2006}
E.~Casas and J.-P. Raymond.
\newblock {Error estimates for the numerical approximation of Dirichlet
  boundary control for semilinear elliptic equations.}
\newblock {\em SIAM J. Control Optim.}, 45(5):1586--1611, 2006.

\bibitem{Chabrowski1991}
J.~Chabrowski.
\newblock {\em {The Dirichlet problem with $L\sp 2$-boundary data for elliptic
  linear equations.}}
\newblock Number 1482 in Lecture Notes in Mathematics. {Berlin etc.:
  Springer-Verlag}, 1991.

\bibitem{ciarlet:78}
P.~G. Ciarlet.
\newblock {\em The finite element method for elliptic problems}.
\newblock North-Holland, Amsterdam, 1978.
\newblock Reprinted by SIAM, Philadelphia, 2002.

\bibitem{Costabel1988}
M.~Costabel.
\newblock Boundary integral operators on {L}ipschitz domains: elementary
  results.
\newblock {\em SIAM J. Math. Anal.}, 19(3):613--626, 1988.

\bibitem{ReyesMeyerVexler2008}
J.~C. de~los Reyes, C.~Meyer, and B.~Vexler.
\newblock Finite element error analysis for state-constrained optimal control
  of the {S}tokes equations.
\newblock {\em Control Cybernet.}, 37(2):251--284, 2008.

\bibitem{DeckelnickGuentherHinze2009}
K.~Deckelnick, A.~G\"unther, and M.~Hinze.
\newblock {Finite element approximation of Dirichlet boundary control for
  elliptic PDEs on two- and three-dimensional curved domains.}
\newblock {\em SIAM J. Control Optim.}, 48(4):2798--2819, 2009.

\bibitem{FrenchKing1991}
D.~A. French and J.~T. King.
\newblock {Approximation of an elliptic control problem by the finite element
  method.}
\newblock {\em Numer. Funct. Anal. Optimization}, 12(3-4):299--314, 1991.

\bibitem{grisvard:85a}
P.~Grisvard.
\newblock {\em Elliptic problems in nonsmooth domains}, volume~24 of {\em
  Monographs and Studies in Mathematics}.
\newblock Pitman, Boston--London--Melbourne, 1985.

\bibitem{grisvard:92b}
P.~Grisvard.
\newblock {\em Singularities in boundary value problems}, volume~22 of {\em
  Research Notes in Applied Mathematics}.
\newblock Springer, New York, 1992.

\bibitem{JerisonKenig1995}
D.~Jerison and C.~E. Kenig.
\newblock The inhomogeneous {D}irichlet problem in {L}ipschitz domains.
\newblock {\em Journal of Functional Analysis}, 130(1):161--219, 1995.

\bibitem{LionsMagenes1968}
J.-L. Lions and E.~Magenes.
\newblock {\em {Probl\`emes aux limites non homog\`enes et applications. Vol.
  1, 2.}}
\newblock Travaux et Recherches Math\'ematiques. Dunod, Paris, 1968.

\bibitem{MayRannacherVexler2008}
S.~May, R.~Rannacher, and B.~Vexler.
\newblock Error analysis for a finite element approximation of elliptic
  {D}irichlet boundary control problems.
\newblock {\em SIAM J. Control Optim.}, 51(3):2585--2611, 2013.

\bibitem{mazja:78}
V.~G. Maz'ya and B.~Plamenevski\u\i.
\newblock ${L}_p$-estimates of solutions of elliptic boundary value problems in
  domains with edges.
\newblock {\em Trudy Moskov. Mat. Obshch.}, 37:49--93, 1978.
\newblock In Russian. Translated in {\em Trans. Mosc. Math. Soc.}, 1:49--97,
  1980.

\bibitem{MitreaTaylor1999}
M.~Mitrea and M.~Taylor.
\newblock Boundary layer methods for {L}ipschitz domains in {R}iemannian
  manifolds.
\newblock {\em J. Funct. Anal.}, 163(2):181--251, 1999.

\bibitem{nicaise:93}
S.~Nicaise.
\newblock {\em Polygonal Interface Problems}, volume~39 of {\em Methoden und
  Verfahren der mathematischen Physik}.
\newblock Peter Lang GmbH, Europ\"aischer Verlag der Wissenschaften,
  Frankfurt/M., 1993.

\bibitem{pfefferer:14}
J.~Pfefferer.
\newblock {\em Numerical analysis for elliptic Neumann boundary control
  problems on polygonal domains}.
\newblock PhD thesis, Universit\"at der Bundeswehr M\"unchen, 2014.
\newblock \url{http://athene.bibl.unibw-muenchen.de:8081/node?id=92055}.

\bibitem{Simader1972}
C.~G. Simader.
\newblock {\em On {D}irichlet's boundary value problem. An $L^{p}$-theory based
  on a generalization of G{\.a}rding's inequality}.
\newblock Lecture Notes in Mathematics, Vol. 268. Springer-Verlag, Berlin,
  1972.

\bibitem{SimaderSohr1996}
C.~G. Simader and H.~Sohr.
\newblock {\em {The Dirichlet problem for the Laplacian in bounded and
  unbounded domains.}}
\newblock {Harlow: Addison Wesley Longman}, 1996.

\bibitem{Verchota1984}
G.~Verchota.
\newblock Layer potentials and regularity for the {D}irichlet problem for
  {L}aplace's equation in {L}ipschitz domains.
\newblock {\em Journal of Functional Analysis}, 59(3):572--611, 1984.

\end{thebibliography}
\end{document}